\numberwithin{equation}{section}
\let\OLDthebibliography\thebibliography
\renewcommand\thebibliography[1]{
  \OLDthebibliography{#1}
  \setlength{\parskip}{0pt}
  \setlength{\itemsep}{2pt plus 0.5ex}
}
\def\@cite#1#2{{\m@th\upshape\bfseries%
[{#1\if@tempswa{\m@th\upshape\mdseries, #2}\fi}]}}
\theoremstyle{plain}
\newtheorem{theorem}{Theorem}[section]
\newtheorem{corollary}[theorem]{Corollary}
\newtheorem{proposition}[theorem]{Proposition}
\newtheorem{lemma}[theorem]{Lemma}
\theoremstyle{definition}
\newtheorem{definition}[theorem]{Definition}
\newtheorem{example}[theorem]{Example}
\newtheorem{remark}[theorem]{Remark}
\newtheorem{question}[theorem]{Question}
\theoremstyle{remark}
\newcommand{\bbC}{{\mathbb{C}}}
\newcommand{\bbD}{{\mathbb{D}}}
\newcommand{\bbF}{{\mathbb{F}}}
\newcommand{\bbN}{{\mathbb{N}}}
\newcommand{\bbT}{{\mathbb{T}}}
\newcommand{\A}{{\mathcal{A}}}
\newcommand{\B}{{\mathcal{B}}}
\newcommand{\C}{{\mathcal{C}}}
\newcommand{\D}{{\mathcal{D}}}
\newcommand{\F}{{\mathcal{F}}}
\renewcommand{\H}{{\mathcal{H}}}
\newcommand{\J}{{\mathcal{J}}}
\newcommand{\K}{{\mathcal{K}}}
\renewcommand{\L}{{\mathcal{L}}}
\newcommand{\M}{{\mathcal{M}}}
\newcommand{\N}{{\mathcal{N}}}
\renewcommand{\O}{{\mathcal{O}}}
\newcommand{\Q}{{\mathcal{Q}}}
\newcommand{\R}{{\mathcal{R}}}
\newcommand{\T}{{\mathcal{T}}}
\newcommand{\V}{{\mathcal{V}}}
\newcommand{\W}{{\mathcal{W}}}
\newcommand{\fF}{{\mathfrak{F}}}
\newcommand{\fL}{{\mathfrak{L}}}
\newcommand{\rC}{{\mathrm{C}}}
\renewcommand{\phi}{\varphi}
\newcommand{\upchi}{{\raise.35ex\hbox{\ensuremath{\chi}}}}
\newcommand{\id}{{\operatorname{id}}}
\newcommand{\spn}{\operatorname{span}}
\newcommand\Span{\mathop{\rm span}}
\newcommand{\ol}{\overline}
\newcommand{\cstarlattice}{\text{C$^*$-Lat}}
\begin{document}
\title[Couniversal C*-algebras of RFD operator algebras]{Couniversality for C*-algebras of residually\\ finite-dimensional operator algebras}

\author{Adam Humeniuk}
\address{Department of Mathematics and Computing, Mount Royal University, Calgary, AB, Canada}
\email{ahumeniuk@mtroyal.ca}

\author{Christopher Ramsey}
\address{Department of Mathematics and Statistics, MacEwan University, Edmonton, AB, Canada}
\email{ramseyc5@macewan.ca}

\author{Ian Thompson}
\address{Department of Mathematical Sciences, University of Copenhagen, Universitetsparken 5, 2100 Copenhagen, Denmark}
\email{ian@math.ku.dk}

\keywords{Residual finite-dimensionality, C*-envelope, Non-selfadjoint operator algebras, C*-covers}
\subjclass[2020]{
47L55, 
46L05,  
47L40  
}

\begin{abstract}
The $\rC^*$-envelope of a non self-adjoint operator algebra is known to encode many properties of the underlying subalgebra. However, the $\rC^*$-envelope does not always encode the residual finite-dimensionality of an operator algebra. To elucidate this failure, we study couniversal existence in the space of residually finite-dimensional (RFD) $\rC^*$-algebras attached to a fixed operator algebra. We construct several examples of residually finite-dimensional operator algebras for which there does not exist a minimal RFD $\rC^*$-algebra, answering a question of the first two authors. For large swathes of tensor algebras of $\rC^*$-correspondences, we also prove that the space of RFD $\rC^*$-algebras fails to be closed under infima of $\rC^*$-covers. In the case of the disc algebra, we are able to achieve this failure for a single pair of RFD $\rC^*$-algebras.
\end{abstract}

\maketitle

\section{Introduction}\label{s:intro}

A central philosophy of non self-adjoint operator algebra theory has been that different (completely isometric) representations of an operator algebra can accentuate distinct properties of the algebra. As a first example, consider the disc algebra $\mathrm{A}(\bbD)\subset\rC(\ol{\bbD})$, which consists of those continuous functions that restrict to be holomorphic on the open unit disc. The canonical generator of the disc algebra is a universal contraction on account of the von Neumann inequality \cite[Corollary 3.12]{Paulsen}. Thus, $\mathrm{A}(\bbD)$ admits a (generator preserving) representation into the universal $\rC^*$-algebra generated by a contraction. On the other hand, the disc algebra also admits representations into the universal $\rC^*$-algebra generated by a unitary, or an isometry. Each of these representations exemplifies a $\rC^*$-algebra with unique qualities: one admits many finite-dimensional representations while not being completely captured by them \cite[Proposition 3.3 and Example 3.4]{CourtSherm}, another is completely understood by its one-dimensional representations, whereas the final $\rC^*$-algebra is not even stably finite.

In hopes of capturing the $\rC^*$-algebraic information intrinsic to an operator algebra, Arveson proposed a non-commutative analogue to the Shilov boundary of a uniform algebra \cite{Arv69}. Categorically, this corresponds to the couniversal (or minimal) $\rC^*$-algebra generated by a completely isometric copy of the given algebra. The existence of such an object, now referred to as the $\rC^*$-envelope of the operator algebra, was first established by Hamana \cite{Hamana}. Since then, the $\rC^*$-envelope has dominated the modern understanding of non self-adjoint operator algebra theory \cite{Arv08},\cite{CH},\cite{CT},\cite{DKChoq},\cite{DKKLL},\cite{HumRam}. This is helped, in large part, because the $\rC^*$-envelope serves as a connective link between self-adjoint and non self-adjoint operator algebra theory; with many properties of an operator algebra being able to be encoded in those of its $\rC^*$-envelope \cite{DKKLL},\cite{DS},\cite{DT},\cite{HumRam},\cite{KatRamMem}. Despite this, the $\rC^*$-envelope can also be insufficient for capturing many properties of an operator algebra.

This is especially true when considering the residual finite-dimensionality of an operator algebra \cite{CR}. An operator algebra $\A$ is residually finite-dimensional (RFD) if, in an appropriate sense, $\A$ can be completely recovered from its finite-dimensional representations (see Definition \ref{d:rfd}). In the non self-adjoint setting, residual finite-dimensionality has become a recent trend, with many questions and mysteries that surround the myriad of different characterizations \cite{CDO},\cite{CM},\cite{CR},\cite{Hartz},\cite{Kat},\cite{MP},\cite{Thompson}. For one, residual finite-dimensionality of non self-adjoint algebras allows for a much more flexible class than that in the $\rC^*$-algebraic canon, such as the unital subalgebra $\A_d\subset\O_d$ generated by the Cuntz isometries. On the other hand, the $\rC^*$-envelope can grossly fail to encode the residual finite-dimensionality of the subalgebra \cite[Section 6]{CR},\cite[Section 4]{CT}. Indeed, simplicity of the Cuntz algebra guarantees that $\O_d$ is the $\rC^*$-envelope of $\A_d$, yet $\O_d$ admits no nonzero finite-dimensional $*$-representations whatsoever. Accordingly, one might hope to uncover an appropriate substitute for the $\rC^*$-envelope, that captures the residual finite-dimensionality of the operator algebra. The question of whether such a substitute can exist was previously raised by the first two authors in \cite[Section 5.1]{HumRam}.

In this work, we demonstrate that the existence of such an object is impossible; both for many well-studied examples and in many different formulations of the problem at hand. To this end, we fix an operator algebra $\A$ that is residually finite-dimensional. The collection of all $\rC^*$-algebras generated by a copy of $\A$, which we call the C*-covers of $\A$, can then be given the structure of a complete lattice, with the supremum and infimum (respectively, join and meet) operations corresponding to universal and couniversal $\rC^*$-algebras associated with a given family \cite{Hamidi}. Our central question now reduces to understanding closure properties for the space of those residually finite-dimensional $\rC^*$-covers for $\A$ under the given infimum operation. 

In Section \ref{s:prelim}, we record terminology and the inner-workings of this lattice structure. Section \ref{s:rfd-semilattice} includes a brief introduction to residual finite-dimensionality in the non self-adjoint context. Following this, most of Section \ref{s:rfd-semilattice} is devoted to proving that the space of RFD $\rC^*$-algebras of large classes of tensor algebras fail to be closed under infimums (Theorem \ref{t:tens-lat}). In Section \ref{s:disc-alg}, a separate analysis allows us to extrapolate a stronger conclusion for the disc algebra (Theorem \ref{t:disc-lattice}), by explicitly constructing exactly two RFD C*-algebras whose infimum is not RFD. Finally, in Section \ref{s:rfd-env}, we provide several examples of an RFD operator algebra $\A$ where there exists no minimal RFD $\rC^*$-algebra generated by $\A$. In other words, there is no appropriate substitute for the $\rC^*$-envelope that also encodes the residual finite-dimensionality of the operator algebra. One such example of this failure is for the subalgebra $\A_d\subset\O_d$, by means of an appropriate Wold-type decomposition for row contractions (Theorem \ref{t:pop-rfd-min}).

As the arguments needed to establish our conclusions are somewhat technical, here we present the common thread underlying our constructions. First, we consider a natural choice of a block-diagonal representation of the operator algebra with finite-dimensional blocks. We then form a downward directed chain of dilations by successively removing finite-dimensional blocks from the representation. Then, by employing a variety of techniques, we concretely identify the representation at the limit; directly proving that the limiting $\rC^*$-algebra is not residually finite-dimensional. The fundamental idea of considering an inductive limit of a chain of dilations appears to be reminiscent of the Dritschel--McCullough dilation theorem \cite{DM}, which is one of the most readily--accessible ways of constructing the $\rC^*$-envelope of an operator algebra. However, the Dritschel--McCullough dilation theorem is an upward directed chain of dilations, whereas our analysis is moreso focused on constructing a downward directed chain.




\section{Preliminaries}\label{s:prelim}

An {\bf operator algebra} is a subalgebra $\A\subset B(\H)$ of Hilbert space operators. Equivalently, $\A$ is an algebra for which there is a sequence of norms on the matrix algebras $M_n(\A)$ that satisfy appropriate compatibility conditions \cite[Section 16]{Paulsen}. A {\bf representation} of an operator algebra $\A$ is a completely contractive algebra homomorphism $\rho:\A\rightarrow B(\H)$. Throughout, $M_n$ will denote the algebra of complex-valued $n\times n$ matrices and $K(\H)$ will denote the algebra of compact operators on $\H$. In this section, we record terminology and machinery on the C*-algebras generated by a completely isometric copy of $\A$. For further information on non self-adjoint operator algebras and the details that we outline in this section, the reader may consult \cite{BlechLM},\cite[Section 3]{Hamidi},\cite[Section 2]{HumRam}, or \cite{Paulsen}.

\begin{definition}
    A {\bf C*-cover} of $\A$ is a pair $(\B, \iota)$ consisting of a $\rC^*$-algebra $\B$ and a completely isometric homomorphism $\iota:\A\rightarrow \B$ such that $\rC^*(\iota(\A)) = \B$.
\end{definition}

Given a pair of $\rC^*$-covers $(\B, \iota), (\C, j)$ for $\A$, a {\bf morphism} $\pi:(\C, j)\rightarrow (\B, \iota)$ of $\rC^*$-covers is a $\ast$-homomorphism $\pi:\C\rightarrow \B$ such that $\pi j = \iota$. Since $\rC^*(j(\A)) = \C$, there is at most one morphism $(\C, j)\rightarrow (\B, \iota)$. Furthermore, as in \cite{Arv08}, the closed two-sided ideal $\ker\pi$ associated to a morphism $\pi:(\C, j)\rightarrow (\B, \iota)$ of $\rC^*$-covers is called a {\bf boundary ideal}.

The central results of this manuscript will be expressed in terms of an appropriately selected order structure on the $\rC^*$-covers of $\A$. Specifically, we define a preorder by $(\B, \iota)\preceq (\C, j)$ if and only if there is a morphism $\pi:(\C, j)\rightarrow (\B, \iota)$. The $\rC^*$-covers $(\B, \iota)$ and $(\C, j)$ are said to be {\bf equivalent} if there are morphisms $(\B, \iota) \rightarrow (\C, j)$ and $(\C, j) \rightarrow (\B, \iota)$. Equivalently, two $\rC^*$-covers are equivalent if there is a $*$-isomorphism $\pi:\C\rightarrow \B$ such that $\pi j = \iota$. Throughout this manuscript, the equivalence class of a $\rC^*$-cover $(\B, \iota)$ will be denoted by $[\B, \iota]$. The preorder $\preceq$ descends to a partial order on the set of equivalence classes, which we denote by the same symbol. Furthermore, whenever appropriate, we will freely identify a $\rC^*$-cover with its equivalence class.

The ordering of the $\rC^*$-covers of an operator algebra gives rise to a natural identification of the two most frequently studied $\rC^*$-covers: the {\bf maximal $\rC^*$-cover}, denoted $[\rC^*_{\max}(\A), \mu]$, and the {\bf minimal $\rC^*$-cover}, denoted $[\rC^*_e(\A), \varepsilon]$. These $\rC^*$-covers correspond to the (unique) maximal and minimal elements of the ordering, and the existence of these $\rC^*$-covers is always guaranteed on account of \cite{Blech},\cite{Hamana}. The minimal $\rC^*$-cover (or {\bf $\rC^*$-envelope} of $\A$) has been the subject of particularly intense focus (see \cite{Arv08},\cite{CH},\cite{DKChoq},\cite{DT},\cite{KatRamMem} and the references therein), dating back to Arveson's seminal work on subalgebras of $\rC^*$-algebras \cite{Arv69}. Our manuscript expands upon this trend within the framework of residual finite-dimensionality for operator algebras.

As recognized in \cite[Section 2.8]{BlechLM},\cite{Hamidi}, the partial ordering $\preceq$ on $\rC^*$-covers promotes to a complete lattice structure on the set of equivalence classes. Accordingly, the set of equivalence classes of $\rC^*$-covers for $\A$ will be denoted by $\cstarlattice(\A)$, and referred to as the {\bf $\rC^*$-lattice} of $\A$. The lattice operations of $\cstarlattice(\A)$ correspond to the following categorical constructs: the join in $\cstarlattice(\A)$ corresponds to the smallest $\rC^*$-cover that is universal for the family $\F$ , whereas the meet is the largest $\rC^*$-cover that is couniversal for $\F$.

The lattice structure of $\cstarlattice(\A)$ will be central to our arguments, and so we explicitly describe the meet and join operations. Let $\F = \{ [\B_i, \iota_i]: i\in I\}$ be a collection of $\rC^*$-covers in $\cstarlattice(\A)$. A representative for the join of $\F$ can be selected as \[\bigvee_{i\in I}[\B_i, \iota_i] = \left[ \rC^*(\iota(\A)), \iota:= \bigoplus_{i\in I}\iota_i \right],\]whereas a representative for the meet of $\F$ can be constructed by the following procedure. For each $i\in I$, there is a morphism $\pi_i:(\rC^*_{\max}(\A), \mu)\rightarrow(\B_i, \iota_i)$ and so \[\J = \ol{\sum_{i\in I}\ker\pi_i}\]defines a closed two-sided ideal of $\rC^*_{\max}(\A)$. The meet of $\F$ can then be described by \[\bigwedge_{i\in I}[\B_i, \iota_i] = [\rC^*_{max}(\A)/\J, q \mu ]\]where $q:\rC^*_{\max}(\A)\rightarrow\rC^*_{\max}(\A)/\J$ is the quotient mapping. Note that $q\mu$ is completely isometric as $\pi_i\mu$ is completely isometric for each $i\in I$.

\begin{remark}\label{r:meet}
    To describe the meet of $\F$, one may also replace $[\rC^*_{\max}(\A),\mu]$ by the join of $\F$, or any C*-cover that dominates all elements of $\F$. Indeed, since $\pi_i:\rC^*_{\max}(\A)\rightarrow\B_i$ is a surjective $\ast$-homomorphism for each $i\in I$, it follows that $\pi_i$ factors through to a $\ast$-homomorphism of $\rC^*(\iota(\A))$ where $\iota = \bigoplus_{i\in I}\iota_i$. As in \cite[Discussion following Proposition 2.4]{HumRam}, one can then identify the meet of $\F$ with an appropriate quotient of the $\rC^*$-algebra $\rC^*(\iota(\A))$.
\end{remark}

Assume that the index set $I$ is downward directed, in the sense that $[\B_i, \iota_i]\preceq[\B_j, \iota_j]$ if and only if $i\geq j$. In this setting, the meet of $\F$ admits a more convenient identification. This follows from observing that the morphisms \[\pi_{i,j}:(\B_j, \iota_j)\rightarrow (\B_i, \iota_i), \quad i\geq j,\]endow $\F$ with the structure of a directed system. The inductive limit \[\B = \varinjlim \B_i\]of the $\rC^*$-algebras along the $\ast$-homomorphisms $\{\pi_{i,j}:\B_j\rightarrow \B_i ~:~ i\geq j\}$ can then be given the structure of a $\rC^*$-cover for $\A$, which coincides with the meet of $\F$ \cite[Proposition 2.5]{HumRam}.

\begin{proposition}\label{p:meet}
    Let $\A$ be an operator algebra and $\F = \{[\B_i, \iota_i]:i\in I\}$ be a downward directed set in $\cstarlattice(\A)$. For each $i\in I$, let $\pi_i:\B_i\rightarrow \varinjlim \B_i$ be the canonical $\ast$-homomorphism arising from the directed system $\{\pi_{i,j}:(\B_j, \iota_j)\rightarrow (\B_i, \iota_i) : i\geq j\}$. Then \[\bigwedge_{i\in I}[\B_i, \iota_i]  = \left[\varinjlim \B_i, \iota \right]\]where $\iota:= \pi_i\iota_i$ for any $i\in I$.
\end{proposition}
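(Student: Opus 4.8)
The plan is to verify directly that the inductive limit carries the structure of a $\rC^*$-cover and then to identify it as the greatest lower bound of $\F$ via the universal property of the colimit. Before anything else, one checks that the data in the statement genuinely assemble into a directed system of $\rC^*$-algebras and that $\iota$ is unambiguous. Since there is at most one morphism between any two $\rC^*$-covers, the relations $\pi_{i,j}\pi_{j,k}=\pi_{i,k}$ (for $i\geq j\geq k$) and $\pi_{i,i}=\id$ hold automatically, so $\{\pi_{i,j}\}$ is a genuine directed system and $\B=\varinjlim\B_i$ with canonical maps $\pi_i\colon\B_i\to\B$ satisfying $\pi_i\pi_{i,j}=\pi_j$ is well defined. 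Because each $\pi_{i,j}$ is a morphism of covers we have $\pi_{i,j}\iota_j=\iota_i$, whence $\pi_i\iota_i=\pi_i\pi_{i,j}\iota_j=\pi_j\iota_j$ for all $i\geq j$; as $I$ is directed, $\pi_i\iota_i$ is independent of $i$, so $\iota$ is well defined.

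Next I would show that $(\B,\iota)$ is a $\rC^*$-cover. The completely isometric property is the technical heart of the argument and rests on the standard norm formula for a $\rC^*$-inductive limit: for $b\in\B_j$ one has $\|\pi_j(b)\|=\lim_{i\geq j}\|\pi_{i,j}(b)\|$, and since matrix amplification commutes with inductive limits the same formula holds at every matrix level. Fix $x\in M_n(\A)$ and apply this to $b=\iota_j^{(n)}(x)$: because $\pi_{i,j}\iota_j=\iota_i$ and each $\iota_i$ is completely isometric, $\|\pi_{i,j}^{(n)}(\iota_j^{(n)}(x))\|=\|\iota_i^{(n)}(x)\|=\|x\|$ for every $i\geq j$, so $\|\iota^{(n)}(x)\|=\|x\|$; thus $\iota$ is completely isometric. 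Generation is then routine: $\B=\ol{\bigcup_i\pi_i(\B_i)}$, and since $\pi_i$ is a $\ast$-homomorphism with $\pi_i(\B_i)=\pi_i(\rC^*(\iota_i(\A)))=\rC^*(\iota(\A))$, we get $\rC^*(\iota(\A))=\B$.

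It remains to identify $[\B,\iota]$ with the meet. That it is a lower bound is immediate: each $\pi_i\colon\B_i\to\B$ satisfies $\pi_i\iota_i=\iota$, hence is a morphism $(\B_i,\iota_i)\to(\B,\iota)$, giving $[\B,\iota]\preceq[\B_i,\iota_i]$ for all $i$. For maximality among lower bounds, let $[\C,k]$ be any lower bound, so that there are morphisms $\psi_i\colon\B_i\to\C$ with $\psi_i\iota_i=k$. Uniqueness of cover morphisms forces the cocone relations $\psi_i\pi_{i,j}=\psi_j$ (both sides are morphisms $(\B_j,\iota_j)\to(\C,k)$), so the universal property of the colimit yields a unique $\ast$-homomorphism $\psi\colon\B\to\C$ with $\psi\pi_i=\psi_i$; then $\psi\iota=\psi\pi_i\iota_i=k$, so $\psi$ is a morphism $(\B,\iota)\to(\C,k)$ and $[\C,k]\preceq[\B,\iota]$. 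Hence $[\B,\iota]=\bigwedge_{i\in I}[\B_i,\iota_i]$. I expect the only real obstacle to be the complete isometry of $\iota$ — equivalently, ensuring the colimit does not collapse $\A$ — which is exactly where the inductive-limit norm formula and the complete isometry of the $\iota_i$ are needed; as a sanity check, one can instead route through the explicit meet formula $\rC^*_{\max}(\A)/\J$ by noting that downward directedness makes $\{\ker(\rC^*_{\max}(\A)\to\B_i)\}$ an increasing net of ideals, so that $\J$ is the closure of their union and $\varinjlim\B_i\cong\rC^*_{\max}(\A)/\J$.
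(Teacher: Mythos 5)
Your argument is correct and complete: the verification that $\iota$ is well defined and completely isometric via the inductive-limit norm formula $\|\pi_j(b)\|=\lim_{i\ge j}\|\pi_{i,j}(b)\|$, the generation claim, and the identification of $[\varinjlim\B_i,\iota]$ as the greatest lower bound via the colimit's universal property are all sound. Note that the paper itself gives no proof of this proposition --- it is quoted from [HumRam, Proposition 2.5] --- so there is nothing internal to compare against; your direct verification (and the alternative route through $\rC^*_{\max}(\A)/\J$, which is exactly the paper's general description of the meet specialized to a downward directed family) is precisely the standard argument one would expect.
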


We emphasize here that $\iota$ is independent of the choice of $i\in I$. Furthermore, this identification for the meet will be the central computational tool that we require in this manuscript.

\section{The semilattice of residually finite-dimensional C*-covers}\label{s:rfd-semilattice}

In this section, we introduce the order-theoretic conditions present for the $\rC^*$-covers in $\cstarlattice(\A)$ that are completely captured by their finite-dimensional representations. To this end, the following definition is central to the remainder of the manuscript.

\begin{definition}\label{d:rfd}
    An operator algebra $\A$ is {\bf residually finite-dimensional}, or RFD, if there is a collection $\{r_\lambda: \lambda\in\Lambda\}$ of positive integers together with a completely isometric representation $\iota: \A\rightarrow\prod_\lambda M_{r_\lambda}$.
\end{definition}

It is clear that an operator algebra $\A$ is residually finite-dimensional if and only if there is a $\rC^*$-cover $[\R, \upsilon]$ of $\A$ such that $\R$ is an RFD $\rC^*$-algebra, which will be of the form $\rC^*(\iota(\A))$ for some representation $\iota$ as above. For simplicity, we refer to $[\R, \upsilon]$ as a residually finite-dimensional $\rC^*$-cover. Large families of operator algebras are known to be residually finite-dimensional, as can be seen in \cite{CDO},\cite{CM},\cite{CR},\cite{CT},\cite{Kat},\cite{MP}. For example, the algebra of upper triangular operators in $B(\ell^2(\bbN))$, the multiplier algebra of a reproducing kernel Hilbert space, and the operator algebra generated by the Cuntz isometries are all residually finite-dimensional. 


For our purposes, the collection of all RFD $\rC^*$-covers in $\cstarlattice(\A)$ will be our primary object of study. With regard to the order structure on this space, there is one known constraint that we record here.

\begin{proposition}\label{p:rfd-join}
    Let $\A$ be an operator algebra. Then the residually finite-dimensional $\rC^*$-covers of $\A$ form a (possibly empty) complete join-semilattice in $\cstarlattice(\A)$.
\end{proposition}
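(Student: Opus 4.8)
The plan is to reduce the statement to a single closure property: that the join (computed in $\cstarlattice(\A)$) of a nonempty family of RFD $\rC^*$-covers is again RFD. Once this is established, completeness as a join-semilattice is automatic, since a subset of a complete lattice that is closed under the ambient join operation inherits those joins and hence has a least upper bound for every nonempty subfamily. I would emphasize at the outset that no bottom element is being claimed (indeed, a recurring theme of this paper is that the $\rC^*$-envelope $\rC^*_e(\A)$ need not be RFD), so ``complete join-semilattice'' here is taken to mean that every nonempty family has a join lying in the set; the parenthetical ``possibly empty'' accounts for the case where $\A$ itself is not RFD, in which case there are no RFD $\rC^*$-covers at all.

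Let $\F = \{[\B_i, \iota_i] : i \in I\}$ be a nonempty family of RFD $\rC^*$-covers. Using the explicit description of the join recorded in Section \ref{s:prelim}, a representative is $\bigl[\rC^*(\iota(\A)), \iota\bigr]$ with $\iota = \bigoplus_{i\in I}\iota_i$; here $\iota$ is completely isometric, as this is inherited from any single factor $\iota_i$. The key structural observation is that the diagonal copy $\rC^*(\iota(\A))$ embeds as a $\rC^*$-subalgebra of the $\ell^\infty$-direct product $\prod_{i\in I}\B_i$, since the block-diagonal operators with $i$-th block lying in $\B_i$ form precisely this product. Thus it suffices to show that $\prod_{i\in I}\B_i$ is RFD and that any $\rC^*$-subalgebra of an RFD $\rC^*$-algebra is again RFD.

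Both of these are permanence properties that I would verify directly from Definition \ref{d:rfd}. For the product: each $\B_i$, being RFD, admits a faithful $\ast$-representation into some $\prod_{\lambda\in\Lambda_i}M_{r_{i,\lambda}}$, and assembling these coordinatewise gives a faithful $\ast$-representation of $\prod_{i\in I}\B_i$ into $\prod_{(i,\lambda)}M_{r_{i,\lambda}}$, which is again a product of matrix algebras; hence the product is RFD. For the subalgebra: restricting a faithful embedding of an RFD $\rC^*$-algebra into a product of matrix algebras to any $\rC^*$-subalgebra remains faithful, so subalgebras of RFD $\rC^*$-algebras are RFD. Combining these, $\rC^*(\iota(\A)) \subseteq \prod_{i\in I}\B_i$ is RFD, so the join of $\F$ is an RFD $\rC^*$-cover, completing the argument.

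I do not expect a genuine obstacle here: the entire content lies in the two elementary permanence properties above together with the identification of the join with a $\rC^*$-subalgebra of the $\ell^\infty$-product. The only points requiring care are bookkeeping ones, namely confirming that $\bigoplus_{i\in I}\iota_i$ is completely isometric and that one correctly identifies the generated $\rC^*$-algebra inside the product rather than inside some larger algebra of operators on $\bigoplus_{i\in I}\H_i$. The conceptual payoff to highlight is the contrast with the meet operation studied in the remainder of the paper: joins preserve residual finite-dimensionality for the soft reason above, whereas meets (inductive limits of downward directed chains) generally do not, which is exactly the phenomenon the subsequent sections exploit.
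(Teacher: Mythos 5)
Your argument is correct and follows essentially the same route as the paper, which simply cites the fact that a direct sum of representations into RFD $\rC^*$-algebras has RFD image; your unpacking of that fact into the two permanence properties (products and $\rC^*$-subalgebras of RFD $\rC^*$-algebras are RFD, applied to $\rC^*(\iota(\A))\subseteq\prod_{i\in I}\B_i$) is exactly the content of the cited lemma. No gaps.
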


\begin{proof}
    This follows after noting that a direct sum of representations into RFD $\rC^*$-algebras has an image that is an RFD $\rC^*$-algebra (see \cite[Lemma 4.1]{Thompson}).
\end{proof}

Apart from the above conclusion, the space of residually finite-dimensional $\rC^*$-covers remains somewhat mysterious. There are many examples of $\rC^*$-covers\[ [\R, \iota] \preceq [\B, j] \preceq [\Q, \eta]\]of an operator algebra where $\R$ and $\Q$ are RFD, yet $\B$ is not. For an example, one may consider the disc algebra $\mathrm{A}(\bbD)\subset\rC(\ol{\bbD})$, which is the subalgebra of functions that are holomorphic on the interior. It is known that there is a diagram of $\rC^*$-covers for this operator algebra 
\[\begin{tikzcd}
      & {[\rC^*(x : \| x\|\leq 1), \mu]} \arrow[ld] \arrow[rd] & \\
       {[\mathrm{C}(\overline{\bbD}),\id]} \arrow[rd] & & {[\rC^*(S),\kappa]} \arrow[ld]\\
       & {[\mathrm{C}(\bbT),\varepsilon]} & \\
\end{tikzcd}\]
where $S\in B(\ell^2(\bbN))$ is the unilateral shift and $\rC^*(x : \|x\|\leq1)$ is the universal $\rC^*$-algebra generated by a contraction (see \cite[Example 1]{Thompson} for more details). In this case, both $\mathrm{C}(\bbT)$ and $\rC^*(x:\|x\|\leq1)$ are residually finite-dimensional \cite[Proposition 2.2 (3)]{CourtSherm}, whereas $\rC^*(S)$ is not because it contains the ideal of compact operators $K(\ell^2(\bbN))$.

On the other hand, for the algebra of upper triangular $2\times 2$ matrices $T_2\subset M_2$, it is known that every $\rC^*$-cover is residually finite-dimensional \cite[Proposition 5.1]{HumRam}. Thus, the RFD $\rC^*$-covers of $T_2$ will trivially form a complete lattice. The purpose of the remainder of this section is to establish that large classes of well-studied operator algebras do not share this conclusion.

\subsection{Tensor algebras of finite C*-correspondences}\label{ss:tensor-alg}

Here, we prove that the complete join-semilattice of RFD $\rC^*$-covers is not always a complete lattice. This is accomplished by establishing such a failure for operator algebras formed through $\rC^*$-correspondences. For additional background on $\rC^*$-algebras associated with $\rC^*$-correspondences, the authors recommend that the reader consult \cite[Section 4.6]{BO},\cite{Katsura}.

\begin{definition}
A {\bf C$^*$-correspondence} $(X,\C)$ is a C$^*$-algebra $\C$ and a $\C$-bimodule $X$ that has a $\C$-valued inner product $\langle \cdot,\cdot\rangle$ that satisfies for all $x,y\in X$, $c\in \C$
\begin{itemize}
    \item $\langle x,yc\rangle = \langle x,y\rangle c$
    \item $\langle x,y\rangle^* = \langle y,x\rangle$
    \item $\langle x,x\rangle \geq 0$ and $\langle x,x\rangle = 0$ if and only if $x=0$
    \item $\langle x,\varphi_X(c)y\rangle = \langle \varphi_X(c^*)x, y\rangle$
\end{itemize}
where the left action is $\ast$-homomorphism $\varphi_X:\C \rightarrow \L(X)$ into the C$^*$-algebra of adjointable operators on $X$.
\end{definition}


\begin{definition}
Given a C$^*$-correspondence, one can define the tensor product C$^*$-correspondence $(X\otimes X, \C)$ where
\begin{itemize}
    \item $x_1c \otimes x_2 = x_1 \otimes cx_2$
    \item $\varphi_{X\otimes X}(c)(x_1\otimes x_2)d = \varphi_X(c)x_1 \otimes x_2d$
    \item $\langle x_1\otimes x_2, y_1\otimes y_2\rangle = \langle x_2, \varphi_X(\langle x_1,y_1\rangle) y_2\rangle$.
\end{itemize}
\end{definition}

The {\bf Fock space} of $X$ is the C$^*$-correspondence $(\F_X,\C)$ where
\[
\F_X = \bigoplus_{n\geq 0} X^{\otimes n}\, .
\]
and $X^{\otimes 0} = \C$. The Fock space of a $\rC^*$-correspondence gives rise to a couple frequently studied operator algebras. For this, consider the non-degenerate $\ast$-homomorphism $\rho_\infty : \C \rightarrow \L(\F_X)$ where $\rho_\infty(c)$ is given by 
\[
\rho_\infty(c) = \varphi_{\F_X}(c) = \left[\begin{matrix} c \\ & \varphi_X(c) \\ && \varphi_{X^{\otimes 2}}(c) \\ &&& \ddots \end{matrix}\right].
\]
As well, consider the linear map $t_\infty : X \rightarrow \L(\F_X)$ given by 
\[
t_\infty(x) = \left[\begin{matrix} 0 \\ t^{(0)}(x) & 0 \\ &t^{(1)}(x) & 0 \\ &&\ddots\ddots\end{matrix}\right]
\]
where $t^{(n)}(x) : X^{\otimes n} \rightarrow X^{\otimes n+1}$ is the left creation map
\[
t^{(n)}(x)(x_1\otimes \cdots \otimes x_n) = x\otimes x_1\otimes \cdots \otimes x_n
\]
and $t^{(0)}(x)(c) = xc$.

\begin{definition} 
The {\bf Toeplitz algebra} of a C$^*$-correspondence $(X,\C)$ is $\T_{(X,\C)} = \rC^*(t_\infty(X),\rho_\infty(\C))$ and the {\bf tensor algebra} is $\T_{(X,\C)}^+ = \overline{Alg\{t_\infty(X),\rho_\infty(\C)\}}$.
\end{definition}

In recent years, tensor algebras have been a primary object of study for non self-adjoint operator algebra theory (see \cite{DKDoc},\cite{DKKLL},\cite{DT},\cite{Kakariadis},\cite{KK},\cite{KatRamMem} and the references therein). This is in part because tensor algebras have demonstrated meaningful interactions with Cuntz-Pimsner $\rC^*$-algebras, as well as comprising a large class of well-studied examples \cite[Example 2.12]{HKR}. In addition, both tensor algebras and Toeplitz algebras enjoy universal properties in terms of certain classes of representations of $(X,\C)$.

\begin{definition}
A {\bf completely contractive representation} $(\rho, t, \H)$ of $(X,\C)$ consists of a $\ast$-homomorphism $\rho: \C \rightarrow B(\H)$ and a completely contractive linear map $t:X\rightarrow B(\H)$ satisfying
\[
t(xc) = t(x)\rho(c) \quad \textrm{and} \quad t(\varphi(c)x) = \rho(c)t(x)
\]
for all $x\in X, c\in\C$. 

A completely contractive representation will be called {\bf isometric} if it additionally satisfies
\[
t(x)^*t(y) = \rho(\langle x,y\rangle)
\]
for all $x,y\in X$.
\end{definition}

Muhly-Solel \cite{MS1} proved that every completely contractive representation $(\rho,t,\H)$ extends to a completely contractive representation $\rho\rtimes t : \T_X^+ \rightarrow B(\H)$, called the integrated form. Furthermore, every completely contractive representation of $\T_X^+$ is the integrated form of some representation $(\rho, t, \H)$. Using this universal property, it has been verified that some of the most frequently studied tensor algebras are residually finite-dimensional \cite[Section 4.1]{CR}. Despite this, the residually finite-dimensional representations acquired in \cite{CR} are typically distinct from the representation arising from $(\rho_\infty, t_\infty)$.

Indeed, the Toeplitz representation $(\rho_\infty, t_\infty)$ is known to be universal for isometric representations of $(X, \C)$, which also coincide with the so-called semi-Dirichlet representations of the tensor algebra $\T_X^+$ \cite{HKR},\cite{MS1}. Here, a representation $\rho:\A\rightarrow B(\H)$ of an operator algebra $\A$ is said to be {\bf semi-Dirichlet} if \[\rho(\A)^*\rho(\A)\subset\ol{\rho(\A)+\rho(\A)^*}.\]The completely isometric semi-Dirichlet representations form a downward-closed complete sublattice \cite[Theorem 3.6]{HKR}, while the residually finite-dimensional representations form a complete join-semilattice (Proposition \ref{p:meet}). Nevertheless, even for standard examples, it can be somewhat uncommon for a representation of a tensor algebra to be both semi-Dirichlet and RFD.

\begin{example}\label{e:popescu}
Let $d\geq2$ be a positive integer. Let $V_1, \ldots, V_d$ be isometries on a Hilbert space $\H$ with pairwise orthogonal ranges and such that \[\sum_{k=1}^d V_k V_k^* \neq I.\]Let $\A_d$ denote the unital norm-closed algebra generated by $\{V_1, \ldots, V_d\}$. It is known that $\A_d$ is the tensor algebra of the $\rC^*$-correspondence $(\bbC^d, \bbC)$ \cite[Example 2.7]{MS1}. Furthermore, the $\rC^*$-algebra $\T_d$ generated by $\A_d$ is the Cuntz-Toeplitz $\rC^*$-algebra, which corresponds to the Toeplitz algebra of $(\bbC^d, \bbC)$.

We claim that $\A_d$ does not have a semi-Dirichlet representation that is RFD. Indeed, by \cite[Theorem 3.10]{HKR}, the maximal semi-Dirichlet representation is the Toeplitz algebra $\T_d$. Then, on account of \cite[Theorem 3.6]{HKR}, the semi-Dirichlet representations of $\A_d$ correspond to $\rC^*$-covers of $\A_d$ that are dominated by $[\T_d, \id]$. None of the corresponding $\rC^*$-algebras can be RFD as they are quotients of $\T_d$. Indeed, $\T_d$ is not RFD as it contains the ideal of compact operators $K(\H)$ on an infinite-dimensional Hilbert space $\H$, while $\T_d/K(\H)\simeq\O_d$ is simple and infinite-dimensional. On the other hand, $\A_d$ has RFD $\rC^*$-covers by \cite[Theorem 4.6]{CR}.
\end{example}

Not withstanding the above example, there are families of tensor algebras that admit semi-Dirichlet RFD representations. If one considers the algebra $T_n\subset M_n$ of upper triangular matrices, then it can be directly verified that $T_n\simeq\T_{(\bbC^{n-1}, \bbC^n)}^+$ where $\bbC^n$ corresponds to diagonal matrices and $\bbC^{n-1}$ corresponds to the superdiagonal. In this case, the identity representation is both RFD and semi-Dirichlet. More generally, one could also consider the graph correspondence $X_G$ of a countable directed graph $G$ \cite[Example 4.6.13]{BO}. It is known that the $\rC^*$-envelope of $\T_{X_G}^+$ is the graph $\rC^*$-algebra \cite{KK}, and that the representation of $\T_{X_G}^+$ inside the graph $\rC^*$-algebra $\rC^*(G)$ is semi-Dirichlet \cite{DKDoc}. If $G$ is finite and no cycle in $G$ has an entry, then this semi-Dirichlet $\rC^*$-cover will always be RFD \cite{BelShul}.

We now work towards proving our main result on the space of residually finite-dimensional $\rC^*$-covers for tensor algebras. We start with a tool to generate $\rC^*$-covers for tensor algebras.

\begin{proposition}\label{pr:tensor-shift}
If $(\rho,t,\H)$ is a completely contractive representation of $(X,\C)$, then so is $(\rho\otimes I_\K, t\otimes V, \H\otimes \K)$ where $V\in B(\K), \|V\|\leq 1,$ and
\[
(\rho\otimes I_\K)(c) = \rho(c) \otimes I_\K \quad \textrm{and} \quad (t\otimes V)(x) = t(x) \otimes V\,.
\]
Furthermore, if $\rho\rtimes t$ is a completely isometric representation and $V=U$, the bilateral shift, or $V=S$, the unilateral shift, then $(\rho\otimes I)\rtimes (t\otimes V)$ is a completely isometric representation.
\end{proposition}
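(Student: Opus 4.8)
The first assertion is a routine verification, and the plan is to check the three defining conditions of a completely contractive representation in turn. That $\rho\otimes I_\K$ is a $\ast$-homomorphism is immediate. To see that $t\otimes V$ is completely contractive, I would factor the map $A\mapsto A\otimes V$ on $B(\H)$ as $A\mapsto(A\otimes I_\K)(I_\H\otimes V)$; the first factor is a $\ast$-homomorphism and the second is right multiplication by an operator of norm $\|V\|\leq 1$, so this map is completely contractive and hence so is $t\otimes V=(\,\cdot\,\otimes V)\circ t$. The two module identities then follow by expanding $(t\otimes V)(xc)=t(xc)\otimes V=t(x)\rho(c)\otimes V=(t(x)\otimes V)(\rho(c)\otimes I_\K)$ and symmetrically on the left, using $I_\K V=VI_\K=V$.

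For the second assertion, my plan is to first reduce to the dense subalgebra $\alg\{t_\infty(X),\rho_\infty(\C)\}$ and then pass to a limit. Fix an element $a$ of this subalgebra and group its defining words by length, writing $a=\sum_{n=0}^N a_n$ where $a_n$ is the sum of the words containing exactly $n$ letters from $t_\infty(X)$. Writing $T_n:=(\rho\rtimes t)(a_n)$, a direct computation on a single word of length $n$ shows
\[
\big((\rho\otimes I)\rtimes(t\otimes V)\big)(a)=\sum_{n=0}^N T_n\otimes V^{n},
\]
since each factor $t_\infty(x_j)$ contributes one copy of $V$ while each factor $\rho_\infty(c_i)$ contributes $I_\K$. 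The key conceptual input is that, for each $z\in\bbT$, the pair $(\rho_\infty,z\,t_\infty)$ is again a completely contractive (indeed isometric) representation of $(X,\C)$, so by the Muhly--Solel universal property its integrated form is a completely contractive endomorphism $\gamma_z$ of $\T_X^+$ with completely contractive inverse $\gamma_{\bar z}$; thus each $\gamma_z$ is a completely isometric automorphism of $\T_X^+$ fixing $\rho_\infty(\C)$ and scaling $t_\infty(X)$ by $z$. Setting $\Phi(z):=\sum_{n} z^{n}T_n$, the same word-length bookkeeping gives $\Phi(z)=(\rho\rtimes t)(\gamma_z(a))$, whence $\|\Phi(z)\|=\|\gamma_z(a)\|=\|a\|$ for every $z\in\bbT$, with the analogous identity for matrix amplifications.

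With this in hand I would treat the bilateral shift first, as it is cleanest. The Fourier transform $\ell^2(\bbZ)\cong L^2(\bbT)$ carries $U^n$ to multiplication by $z^n$, so $\sum_n T_n\otimes U^n$ is unitarily equivalent to the multiplication operator $M_\Phi$ on $L^2(\bbT;\H)$ with continuous (polynomial) symbol $\Phi$. Since the norm of a multiplication operator equals the supremum of the pointwise norms of its symbol, I obtain $\|\sum_n T_n\otimes U^n\|=\sup_{z\in\bbT}\|\Phi(z)\|=\|a\|$; amplifying and invoking the matrix version of the previous paragraph yields complete isometry on the dense subalgebra, and continuity of the (completely contractive, hence bounded) integrated forms extends the conclusion to all of $\T_X^+$.

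For the unilateral shift the upper bound $\|\sum_n T_n\otimes S^n\|\leq\|a\|$ is already supplied by the first assertion, so the work is the reverse inequality, which I expect to be the main obstacle. Under the identification $\ell^2(\bbN)\cong H^2(\bbT)$ with $e_n\leftrightarrow z^n$, the operator $\sum_n T_n\otimes S^n$ becomes the analytic Toeplitz operator $M_\Phi$ on $H^2_\H$, and I would bound its norm from below using normalized reproducing kernels: testing against $f=\xi\,k_w/\|k_w\|$ and $g=\eta\,k_w/\|k_w\|$ for $w\in\bbD$ and unit vectors $\xi,\eta\in\H$, the reproducing property collapses $\langle M_\Phi f,g\rangle$ to $\langle\Phi(w)\xi,\eta\rangle$, giving $\|M_\Phi\|\geq\sup_{w\in\bbD}\|\Phi(w)\|$. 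Letting $w\to z\in\bbT$ radially and using continuity of the polynomial symbol together with the boundary identity $\|\Phi(z)\|=\|a\|$ established above yields $\|M_\Phi\|\geq\|a\|$, hence equality; the same computation with amplifications upgrades this to complete isometry, and density again extends it to $\T_X^+$. The only point needing care is the passage from polynomials to general elements, which is routine here since both integrated forms are completely contractive and the homogeneous decomposition and reproducing-kernel estimates are stable under the matrix amplifications required for the completely isometric conclusion.
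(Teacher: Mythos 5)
Your argument is correct, but it is routed differently from the paper's. For the unilateral shift the paper simply invokes the general operator inequality $\|\sum_{k} a_k\|\le\|\sum_k a_k\otimes S^k\|$ from \cite[Lemma 7.4]{KatRamMem} (applied to $a_k=T_k$, together with its matricial version and the complete contractivity already established in the first part), and then obtains the bilateral case for free by compressing $\H\otimes\ell^2(\bbZ)$ to $\H\otimes\ell^2(\bbN)$, which carries $(\rho\otimes I,t\otimes U)$ onto $(\rho\otimes I,t\otimes S)$. You instead prove the needed lower bounds from scratch: the gauge automorphisms $\gamma_z$ give $\|\Phi(z)\|=\|a\|$ for \emph{every} $z\in\bbT$, after which the Fourier identification $\sum_n T_n\otimes U^n\simeq M_\Phi$ on $L^2(\bbT;\H)$ handles $U$, and the reproducing-kernel estimate $\|M_\Phi\|\ge\sup_{w\in\bbD}\|\Phi(w)\|$ on $H^2_{\H}$ handles $S$. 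Your route is self-contained and actually yields the stronger identity $\|\sum_n T_n\otimes S^n\|=\sup_{z\in\bbT}\|\Phi(z)\|$, whereas the cited lemma only uses the value at $z=1$; the cost is the extra input of the gauge action (which, as a technical aside, is most cleanly realized as $\mathrm{Ad}(W_z)$ for the diagonal unitary $W_z=\bigoplus_n z^nI_{X^{\otimes n}}$ on $\F_X$, rather than through the Muhly--Solel universal property, since $(\rho_\infty,zt_\infty)$ acts on a Hilbert module rather than a Hilbert space). Your bookkeeping of the homogeneous decomposition, the passage to matrix amplifications, and the density argument are all sound, and you could shorten the write-up by deducing the bilateral case from the unilateral one (or vice versa) by compression as the paper does, rather than treating the two shifts independently.
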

\begin{proof}
It is immediate that $(\rho\otimes I_\K, t\otimes V, \H\otimes \K)$ is a completely contractive representation of $(X,\C)$.

In \cite[Lemma 7.4]{KatRamMem} it is proven that for any $a_0,\dots, a_n\in B(\H)$ 
\[
\left\| \sum_{k=0}^n a_k\right\| \ \leq \ \left\| \sum_{k=0}^n a_k \otimes S^k \right\|\,.
\]
Using this and its matricial version, we obtain that $(\rho\otimes I)\rtimes (t\otimes S)$ is completely isometric. 

Lastly, when $V=U$ is the bilateral shift, compression to the subspace $\K_2 = \H\otimes \ell^2(\mathbb N) \subset \H\otimes\ell^2(\mathbb Z) = \K_1$ maps $(\rho\otimes I_{\K_1}, t\otimes U, \H\otimes \K_1)$ to $(\rho\otimes I_{\K_2}, t\otimes S, \H\otimes \K_2)$.
\end{proof}

Proposition \ref{pr:tensor-shift} is very similar to the so-called Extension Theorem from \cite[Theorem 7.5]{KatRamMem}. The discrepancy of Proposition \ref{pr:tensor-shift} is that the Extension Theorem \cite{KatRamMem} is concerned with isometric representations of $(X,\C)$.

Let $\{e_n: n\in\bbN\}$ denote the canonical orthonormal basis of $\ell^2(\bbN)$. Given an integer $n\ge 1$, we set $\H_n = \text{span}\{e_1,\ldots, e_n\}$ and consider the truncation $S_n:= P_{\H_n} S|_{\H_n}$ of the unilateral shift. For our purposes, the first observation that we require is the following.

\begin{corollary}
If $\rho \rtimes t$ is a completely isometric representation of $\T_X^+$ and $Y \subset \mathbb N$ is infinite, then
\[
\left(\rho \otimes \bigoplus_{n\in Y} I_n\right) \rtimes \left(t \otimes \bigoplus_{n\in Y} S_n\right)
\ = \ \bigoplus_{n\in Y} (\rho \otimes I_n) \rtimes (t \otimes S_n)
\]
is completely isometric.
\end{corollary}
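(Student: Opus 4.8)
The plan is to reduce everything to the completely isometric unilateral-shift representation $\sigma := (\rho\otimes I)\rtimes(t\otimes S)$ supplied by Proposition \ref{pr:tensor-shift}. First I would record that the displayed equality is purely formal: under the identification $\H\otimes\bigl(\bigoplus_{n\in Y}\H_n\bigr)\cong\bigoplus_{n\in Y}(\H\otimes\H_n)$ the tensor product distributes over the direct sum, so that $\rho\otimes\bigoplus_{n\in Y}I_n=\bigoplus_{n\in Y}(\rho\otimes I_n)$ and $t\otimes\bigoplus_{n\in Y}S_n=\bigoplus_{n\in Y}(t\otimes S_n)$ on the generators; since both sides are integrated forms of completely contractive representations agreeing on $\rho(\C)$ and $t(X)$, they coincide. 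Complete contractivity of the common representation is immediate, being a direct sum of completely contractive representations (cf. the first part of Proposition \ref{pr:tensor-shift} with $V=\bigoplus_{n\in Y}S_n$, which has norm at most $1$), so only the lower norm bound requires work.

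The key step is to realize each summand as a compression of $\sigma$. The unilateral shift $S$ on $\ell^2(\bbN)$ satisfies $S^*\H_n\subseteq\H_n$, so $\H_n$ is co-invariant for $S$ and hence $\H\otimes\H_n$ is co-invariant for $\sigma$, its orthogonal complement $\H\otimes\H_n^\perp$ being invariant under both generators $\rho(c)\otimes I$ and $t(x)\otimes S$. Writing operators in block form with respect to $\H\otimes\H_n\oplus(\H\otimes\H_n)^\perp$, every $\sigma(a)$ is block lower-triangular, so the compression $a\mapsto P_{\H\otimes\H_n}\sigma(a)|_{\H\otimes\H_n}$ is multiplicative; being completely contractive and sending $\rho(c)\otimes I\mapsto\rho(c)\otimes I_n$ and $t(x)\otimes S\mapsto t(x)\otimes S_n$, it must equal $(\rho\otimes I_n)\rtimes(t\otimes S_n)$. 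In particular $\|(\rho\otimes I_n)\rtimes(t\otimes S_n)(a)\|\le\|\sigma(a)\|$ for every $a$.

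It remains to recover the full norm of $\sigma$ from these compressions, and this is where infinitude of $Y$ enters. Enumerating $Y=\{n_1<n_2<\cdots\}$, the projections $P_{\H\otimes\H_{n_k}}$ converge to the identity in the strong operator topology, so for fixed $a$ and a unit vector $\xi$ one has $\|\sigma(a)\xi\|=\lim_k\|P_{\H\otimes\H_{n_k}}\sigma(a)P_{\H\otimes\H_{n_k}}\xi\|$, whence $\|\sigma(a)\|\le\sup_{n\in Y}\|P_{\H\otimes\H_n}\sigma(a)|_{\H\otimes\H_n}\|$; combined with the reverse inequality from the previous step this gives $\sup_{n\in Y}\|(\rho\otimes I_n)\rtimes(t\otimes S_n)(a)\|=\|\sigma(a)\|$, which is exactly the norm of the direct sum. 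As $\sigma$ is completely isometric, running the identical argument at every matrix level (replacing $\sigma$ by $\sigma^{(m)}$ and $a$ by a matrix over $\T_X^+$) shows the direct sum is completely isometric. The main obstacle is precisely this transfer from the single unilateral-shift representation to the direct sum of finite truncations: the $S_n$ are nilpotent and are not unilateral shifts, so the ``furthermore'' clause of Proposition \ref{pr:tensor-shift} does not apply to $V=\bigoplus_{n\in Y}S_n$ directly, and one must instead exploit co-invariance to exhibit each truncated representation as a compression and use the cofinality of $Y$ to recover the norm in the limit.
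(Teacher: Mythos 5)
Your proof is correct and is precisely the argument the paper compresses into its one-line proof (``the compressions to $\H\otimes\bbC^n$ completely norm''): you identify each summand as the compression of the completely isometric representation $(\rho\otimes I)\rtimes(t\otimes S)$ to the co-invariant subspace $\H\otimes\H_n$, and recover the norm from the strong convergence $P_{\H\otimes\H_n}\to I$ along the infinite set $Y$. No gaps; you have simply supplied the details the authors left implicit.
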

\begin{proof}
The compressions to $\H\otimes \mathbb C^n$ completely norm.
\end{proof}




Note that the (classical) Toeplitz algebra is not RFD because it contains the ideal of compact operators on infinite-dimensional Hilbert space. Despite this, the procedure of Proposition \ref{pr:tensor-shift} can induce $\rC^*$-covers for the tensor algebra that remain RFD.

\begin{example}
For each $n\geq 1$, notice that $(S_n \otimes S)^n = 0$. Moreover, $S_n \otimes S$ is unitarily equivalent, by a permutation, to 
\[
0 \oplus S_1 \oplus \dots \oplus S_{n-1} \oplus S_n\otimes I_{\ell_2(\H)}\,.
\]
Thus, if $\rho(1) = I$ and $t(1) = \bigoplus_{n\geq 1} S_n$ then 
\[
\rC^*(\rho\rtimes t(A(\mathbb D))) \simeq \rC^*((\rho\otimes I)\rtimes (t\otimes S)(A(\mathbb D)))
\]
and so the latter C*-algebra is still RFD.
In the same we get that
\[
\rC^*\left(\left(\rho\otimes \bigoplus_{m\geq k} I_m\right)\rtimes \left(t\otimes \bigoplus_{m\geq k} S_m\right)(A(\mathbb D))\right)
\simeq \rC^*(\rho\rtimes t(A(\mathbb D)))
\]
because 
\[
(S_n \otimes S_m)^{\inf\{n,m\}} = 0\,.
\]
\end{example}

\bigskip

A C$^*$-correspondence $(X,\C)$ will be called {\bf finite} when both $X$ and $\C$ are finite-dimensional. The tensor algebra of a finite C*-correspondence is RFD, but there are also many more cases that lead to being RFD as well \cite[Section 4.1]{CR}. 

\begin{proposition}\label{prop:finite}
If $(X,\C)$ is a finite C*-correspondence, then
\[
\rC^*((\rho_\infty \otimes I_n) \rtimes (t_\infty  \otimes S_n)(\T_X^+))
\]
is a finite-dimensional $\rC^*$-algebra for each $n\in \mathbb N$.
\end{proposition}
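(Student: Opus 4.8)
The plan is to exploit a $\bbZ$-grading of the Fock space that both generators respect, and then to show that the graded pieces all collapse onto a single amplification. Write the representation space as $\F_X\otimes\bbC^n=\bigoplus_{m\ge 0,\,1\le j\le n}X^{\otimes m}\otimes e_j$, where $e_1,\dots,e_n$ is the standard basis of $\bbC^n$, so that $S_ne_j=e_{j+1}$ for $j<n$ and $S_ne_n=0$. The first thing I would record is that $(t_\infty\otimes S_n)(x)$ sends $X^{\otimes m}\otimes e_j$ into $X^{\otimes(m+1)}\otimes e_{j+1}$ (annihilating the summands with $j=n$), while $(\rho_\infty\otimes I_n)(c)$ preserves each $X^{\otimes m}\otimes e_j$. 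Hence every generator, and therefore every element of $\B:=\rC^*((\rho_\infty\otimes I_n)\rtimes(t_\infty\otimes S_n)(\T_X^+))$, leaves the quantity $d:=m-j$ invariant. Setting $\W_d=\bigoplus_{m-j=d}X^{\otimes m}\otimes e_j$ gives an orthogonal decomposition $\F_X\otimes\bbC^n=\bigoplus_{d\ge -n}\W_d$ into reducing subspaces for $\B$, so the compression $\pi_d$ onto $\W_d$ is a $\ast$-homomorphism and $\|T\|=\sup_{d\ge -n}\|\pi_d(T)\|$ for each $T\in\B$.

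The key step concerns the pieces with $d\ge 0$, where $\W_d=\bigoplus_{j=1}^nX^{\otimes(d+j)}\otimes e_j$ has dimension growing with $d$. Using the factorisation $X^{\otimes(d+j)}\cong X^{\otimes j}\otimes X^{\otimes d}$ (the first $j$ legs, then the trailing $d$ legs), I would verify that $\varphi_{X^{\otimes(d+j)}}(c)=\varphi_{X^{\otimes j}}(c)\otimes I_{X^{\otimes d}}$ and that the creation operator factors as $t^{(d+j)}(x)=t^{(j)}(x)\otimes I_{X^{\otimes d}}$, since both act only on the leftmost legs and leave the trailing $X^{\otimes d}$ untouched. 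Re-indexing accordingly produces a unitary $\W_d\cong\W_0\otimes X^{\otimes d}$ that carries $\pi_d$ to $\pi_0\otimes I_{X^{\otimes d}}$; that is, $\pi_d$ is merely an amplification of $\pi_0$. In particular $\|\pi_d(T)\|=\|\pi_0(T)\|$ for every $T\in\B$ and every $d\ge 0$.

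To finish, note that for the finitely many indices $-n\le d\le -1$ the space $\W_d$ is finite-dimensional (as both $X$ and $\C$ are), so each compression algebra $\B_d:=\pi_d(\B)$ is finite-dimensional, and $\B_0=\pi_0(\B)\subseteq\L(\W_0)$ is finite-dimensional as well. Consider then the $\ast$-homomorphism $\Phi=(\pi_{-n},\dots,\pi_{-1},\pi_0):\B\to\bigoplus_{d=-n}^{0}\B_d$. It is injective: if $\pi_d(T)=0$ for all $-n\le d\le 0$, then by the previous paragraph $\pi_d(T)=0$ for every $d\ge 0$ too, whence $\|T\|=\sup_{d\ge -n}\|\pi_d(T)\|=0$. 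Thus $\B$ embeds as a $\rC^*$-subalgebra of the finite-dimensional algebra $\bigoplus_{d=-n}^{0}\B_d$ and is therefore finite-dimensional.

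The only genuine obstacle is the middle paragraph. Because $\dim\W_d\to\infty$, the representations carried by distinct graded pieces are not unitarily equivalent, so one cannot collapse the grading naively; the resolution is that they are all amplifications of the single finite-dimensional representation $\pi_0$, which is exactly what the tensor factorisation of $\varphi$ and of the creation operators supplies. Once this is established, the remaining steps are routine bookkeeping.
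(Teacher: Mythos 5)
Your proof is correct, but it takes a genuinely different route from the paper's. The paper argues algebraically: it records the two relations $(t_\infty(x)\otimes S_n)^*(t_\infty(y)\otimes S_n)=\rho_\infty(\langle x,y\rangle)\otimes(I_n-E_{nn})$ and $(t_\infty(x_1)\otimes S_n)\cdots(t_\infty(x_n)\otimes S_n)=0$, and uses them to Wick-order every word in the generators, so that the C*-algebra sits inside $n\times n$ matrices with entries in the finite-dimensional span of $\C$ and the products $t_\infty(x_1)\cdots t_\infty(x_k)t_\infty(y_1)^*\cdots t_\infty(y_m)^*$ with $k,m<n$. You instead argue spatially, grading $\F_X\otimes\bbC^n$ by $d=m-j$, checking the grading is preserved, and showing every block with $d\ge0$ is an inflation of the single finite-dimensional block $\pi_0$, which yields an explicit faithful representation on the finite-dimensional piece $\bigoplus_{d=-n}^{0}\W_d$. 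One point of hygiene: $\F_X$ is a Hilbert $\C$-module rather than a Hilbert space, so your reducing subspaces are orthogonally complemented reducing submodules and the identification $\W_d\cong\W_0\otimes_{\C}X^{\otimes d}$ is an interior tensor product; everything you invoke (the sup formula for block-diagonal adjointable operators, the factorisations $t^{(d+j)}(x)=t^{(j)}(x)\otimes I$ and $\varphi_{X^{\otimes(d+j)}}(c)=\varphi_{X^{\otimes j}}(c)\otimes I$) survives this, though the asserted equality $\|\pi_d(T)\|=\|\pi_0(T)\|$ is more than is automatic for interior tensor products --- only the inequality $\|\pi_d(T)\|\le\|\pi_0(T)\|$, equivalently the implication $\pi_0(T)=0\Rightarrow\pi_d(T)=0$, is needed for the injectivity of $\Phi$. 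Your route is longer but buys a concrete faithful finite-dimensional representation and anticipates the reducing-subspace analysis of the spaces $\K_n$ carried out later in the section, whereas the paper's relation-chasing is shorter and makes the finite spanning set visible directly.
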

\begin{proof}
For any $x,y\in X$, we have 
\[
(t_\infty(x)\otimes S_n)^*(t_\infty(y)\otimes S_n) = \rho_\infty(\langle x,y\rangle) \otimes  (I_n - E_{nn})\,.
\]
As well, for any $x_1,\dots, x_n\in X$
\[
(t_\infty(x_1)\otimes S_n)\dots (t_\infty(x_n)\otimes S_n) = 0\,.
\]
Therefore, $\rC^*((\rho_\infty \otimes I_n) \rtimes (t_\infty \otimes S_n))$ is contained in $n\times n$ matrices with entries in
\[
\C + \Span\Big\{ t_\infty(x_1)\dots t_\infty(x_k)t_\infty(y_1)^*\dots t_\infty(y_m)^* : x_i,y_j\in X, 0\leq k,m < n \Big\}\,.\qedhere
\]
\end{proof}


Finally, we will require some finer facts on the representation theory of tensor algebras.

\begin{lemma}
Let $\K$ be the subspace of $\H\otimes \ell_2(\mathbb N)$ generated by 
\[
\Big((\rho_\infty\otimes I_{\ell_2(\mathbb N)}) \rtimes (t_\infty \otimes S)(\T_X^+)\Big)(\H\otimes e_1)\,.
\]
Then $(P_\K(\rho_\infty \otimes I_{\ell_2(\mathbb N)})P_\K, P_\K(t_\infty \otimes S)P_\K)$ is the minimal isometric dilation (coextension) of $(\rho_\infty\otimes I_2, t_\infty \otimes S_2)$. Moreover, the integrated form is a completely isometric representation of $\T_X^+$.
\end{lemma}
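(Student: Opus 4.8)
The strategy is to realise the compression to $\K$ as an isometric coextension of $(\rho_\infty\otimes I_2, t_\infty\otimes S_2)$ and then to invoke the essential uniqueness of the minimal isometric dilation. The first task is to confirm that the ambient representation $(\rho_\infty\otimes I_{\ell_2(\mathbb N)}, t_\infty\otimes S)$ is isometric: since $(\rho_\infty,t_\infty)$ is isometric and $S^*S=I$, a one-line computation gives $(t_\infty(x)\otimes S)^*(t_\infty(y)\otimes S)=\rho_\infty(\langle x,y\rangle)\otimes I=(\rho_\infty\otimes I)(\langle x,y\rangle)$, and Proposition \ref{pr:tensor-shift} shows its integrated form is completely isometric. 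Because $\K$ is by construction invariant under $(\rho_\infty\otimes I)\rtimes(t_\infty\otimes S)(\T_X^+)$, I would next observe that the compression to $\K$ is again isometric: for an isometric representation the identity $v(x)^*v(y)=\sigma(\langle x,y\rangle)$ survives restriction to an invariant subspace, since $v(y)$ already maps $\K$ into $\K$ and the intervening projection is redundant.

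The coextension structure comes from a direct fibre computation. The subspace $\H\otimes\operatorname{span}\{e_1,e_2\}$ is coinvariant for the ambient representation: its complement $\H\otimes\operatorname{span}\{e_3,e_4,\dots\}$ is carried into itself by $\rho_\infty(c)\otimes I$ (fibrewise) and by $t_\infty(x)\otimes S$ (which advances the $\ell_2(\mathbb N)$-index). Compressing to it returns $(\rho_\infty\otimes I_2, t_\infty\otimes S_2)$, using $P_{\mathbb C^2}S|_{\mathbb C^2}=S_2$. Hence the ambient representation is an isometric coextension of the base, and the minimal isometric coextension is the restriction to the smallest invariant subspace carrying a coinvariant copy of the base. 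I would then identify this space with $\K$ and conclude by uniqueness of the minimal isometric dilation (coextension).

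For complete isometry of the integrated form on $\K$, I would argue that the resulting isometric representation is pure: on $\K$ the creation-type operators $t_\infty(x)\otimes S$ have no coisometric part, so by the Wold decomposition for isometric representations of $\rC^*$-correspondences the representation is unitarily equivalent to a Fock-type (induced) representation, whose integrated form is completely isometric. Since the compression to $\K$ is in any case completely contractive, being the restriction of the completely isometric $(\rho_\infty\otimes I)\rtimes(t_\infty\otimes S)$ to an invariant subspace, this forces complete isometry. This is precisely the mechanism by which the dilation repairs the complete isometry that $(\rho_\infty\otimes I_2, t_\infty\otimes S_2)$ itself lacks, the latter annihilating every product of two or more creation operators.

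I expect the matching step to be the main obstacle. Although $\H\otimes\operatorname{span}\{e_1,e_2\}$ is coinvariant in the ambient space, it is \emph{not} contained in $\K$: the vectors $X^{\otimes 0}\otimes e_2$ are never produced from $\H\otimes e_1$ by the creation operators, so one cannot simply point to $\H\otimes\mathbb C^2$ as the base sitting inside $\K$. The delicate point is therefore to locate a coinvariant copy of $(\rho_\infty\otimes I_2, t_\infty\otimes S_2)$ genuinely inside $\K$ and to verify minimality there. The cleanest route I foresee is to compute the relevant Wold multiplicities of the compression and to use uniqueness of the minimal isometric dilation to identify $\K$ with the minimal coextension up to unitary equivalence, rather than attempting to exhibit the embedded base as a literal coordinate subspace.
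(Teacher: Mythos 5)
The paper itself proves this lemma by citation: the first statement is read off from \cite[Section 3]{MS1}, and the second from the gauge-invariant uniqueness theorem, using that the compression to $\K$ admits a gauge action and is faithful on $\C$. Your argument for the \emph{second} statement is a genuinely different and viable route. The compression to the invariant subspace $\K$ is isometric, and it is pure with wandering subspace $\K\ominus\overline{(t_\infty(X)\otimes S)\K}=\H\otimes e_1$, since level $e_{k+1}$ of $\K$ is the closed span of $t_\infty(x_1)\cdots t_\infty(x_k)\H\otimes e_{k+1}$ and the creation operators carry level $e_{k+1}$ onto a dense subspace of level $e_{k+2}$. Hence the compression is unitarily equivalent to the representation induced from $\rho_\infty$ acting on $\H\otimes e_1$, and since $\rho_\infty$ is faithful on $\C$ this induced representation generates a faithful copy of $\T_X$, so the integrated form is completely isometric. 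Making the wandering-subspace computation and the final implication explicit would complete this half; it is more self-contained than the paper's appeal to gauge invariance.

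For the \emph{first} statement your plan stalls exactly at the step you flagged, and the obstruction is real rather than cosmetic. The subspace $X^{\otimes 0}\otimes e_2$ of $\H\otimes\bbC^2$ is \emph{reducing} for the base representation: it is preserved by $\rho_\infty(\C)\otimes I_2$, and both $t_\infty(x)\otimes S_2$ and its adjoint annihilate it (the latter because $t_\infty(x)^*$ vanishes on $X^{\otimes 0}$). The minimal isometric coextension of the base therefore splits off a nonzero induced summand obtained by dilating the zero creation map on this reducing piece. Carrying out the Wold comparison you propose thus yields wandering representation $\rho_\infty\oplus\sigma_0$ for the minimal isometric dilation, where $\sigma_0$ is the representation of $\C$ on $X^{\otimes 0}$, against $\rho_\infty$ for the compression to $\K$; these need not be unitarily equivalent, and in any case no equivalence of dilations can restrict to the identity on a base space that $\K$ does not contain. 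So uniqueness of the minimal isometric dilation cannot be used to finish. What your computation would actually establish is that $P_\K(\cdot)P_\K$ is the minimal isometric coextension of the restriction of $(\rho_\infty\otimes I_2,t_\infty\otimes S_2)$ to the coinvariant subspace $(X^{\otimes 0}\otimes e_2)^{\perp}$ of $\H\otimes\bbC^2$, i.e.\ the first claim holds only after this (harmless) correction. Since everything downstream uses only the second statement and the concrete form of $\K$ and $\K_n$, nothing later in the paper is affected, but as written your matching step is a gap, not a deferral.
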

\begin{proof}
The first statement follows from \cite[Section 3]{MS1}. As there is a gauge action, the Gauge Invariant Uniqueness Theorem \cite[Theorem 6.2]{Katsura} implies that the integrated form of the compressed representation is a $*$-isomorphism of $\T_X$.
\end{proof}

\begin{lemma}
If $(X,\C)$ is a C*-correspondence, then 
\[
\K_n \ = \ \overline{\Span}\{(t_\infty(x_1)\otimes S_n)\dots(t_\infty(x_k)\otimes S_n)\H\otimes e_1 : x_1,\dots,x_k\in X, 0\leq k< n\}
\]
is a reducing subspace of $\rC^*((\rho_\infty\otimes I_n) \rtimes (t_\infty \otimes S_n)(\T_X^+))$. Moreover, by naturally embedding $\H \otimes \mathbb C^n$ into $\H \otimes \ell_2(\mathbb N)$, we see that 
\[
\overline{\bigcup_{n\geq 2} \K_n} = \K
\]
and 
\[
P_{\K_n}(\rho_\infty \otimes I_n) \rtimes (t_\infty \otimes S_n)P_{\K_n} = P_{\K_n}(\rho_\infty \otimes I_{\ell_2(\mathbb N)}) \rtimes (t_\infty \otimes S)P_{\K_n}\,.
\]
\end{lemma}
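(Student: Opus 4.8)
The plan is to reduce all three assertions—that $\K_n$ reduces the relevant $\rC^*$-algebra, that the $\K_n$ exhaust $\K$, and that the two compressions agree—to the action of the generators on the explicit spanning vectors of $\K_n$. Throughout I identify $\H$ with $\F_X$ and record the combinatorics of the truncated shift: $S_n e_j = e_{j+1}$ for $j<n$, $S_n e_n = 0$, and dually $S_n^* e_j = e_{j-1}$ for $j\ge 2$ with $S_n^* e_1 = 0$. Since the covariance relation $\rho_\infty(c)t_\infty(x)=t_\infty(\varphi_X(c)x)$ lets one move every $\rho_\infty$-factor to the front and absorb it into a creation operator, both $\K$ and $\K_n$ are closed spans of pure creation words; I write $\eta_k := t_\infty(x_1)\cdots t_\infty(x_k)\xi \otimes e_{k+1}$ for the typical spanning vector of $\K_n$, obtained by applying $k$ creation operators to $\xi\otimes e_1$ with $0\le k<n$.

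First I would establish that $\K_n$ is reducing by checking invariance under $\rho_\infty(c)\otimes I_n$ and $t_\infty(x)\otimes S_n$ together with their adjoints. Applying $t_\infty(x)\otimes S_n$ to $\eta_k$ produces $\eta_{k+1}$ when $k+1<n$ and $0$ when $k+1=n$ (since $S_n e_n=0$), and $\rho_\infty(c)\otimes I_n$ absorbs into the first creation factor by covariance; both stay inside $\K_n$. The decisive computation is invariance under the adjoint $t_\infty(x)^*\otimes S_n^*$: here the isometric (Cuntz-type) relation $t_\infty(x)^*t_\infty(x_1)=\rho_\infty(\langle x,x_1\rangle)$, valid because the Fock representation is isometric, collapses one creation factor, and a further use of covariance rewrites the result as $t_\infty(\varphi_X(\langle x,x_1\rangle)x_2)t_\infty(x_3)\cdots t_\infty(x_k)\xi\otimes e_k$, which is again of the form $\eta_{k-1}$ (and is $0$ when $k=0$). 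This is the step I expect to be the main obstacle, since it is where the isometric relation must fold a higher-degree word back into a lower-degree spanning vector and where the boundary behaviour at the top block $e_n$ has to be controlled. Invariance under $\rho_\infty(c^*)\otimes I_n$ is routine, and since the generators $\rho_\infty(c)\otimes I_n$ and $t_\infty(x)\otimes S_n$ together with their adjoints preserve $\K_n$, so does the generated $\rC^*$-algebra.

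For the identity $\overline{\bigcup_{n\ge 2}\K_n}=\K$ I would use the embedding $\H\otimes\bbC^n\hookrightarrow\H\otimes\ell^2(\bbN)$ and observe that along the chain $e_1\mapsto e_2\mapsto\cdots\mapsto e_{k+1}$ with $k+1\le n$ the truncated shift $S_n$ acts exactly as $S$; hence for $k<n$ each spanning vector $\eta_k$ of $\K_n$ literally equals the corresponding spanning vector of $\K$ built from $t_\infty\otimes S$. As every spanning vector of $\K$ uses only finitely many creation factors, it lies in $\K_n$ for all large $n$, and passing to closures gives the equality. Finally, for the compression identity it is enough to test on the monomials $\rho_\infty(c)t_\infty(x_1)\cdots t_\infty(x_k)$ that densely span $\T_X^+$; on such a monomial both integrated forms return $\bigl[\rho_\infty(c)t_\infty(x_1)\cdots t_\infty(x_k)\bigr]\otimes V^k$ with $V=S$ or $V=S_n$. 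Because $S^k$ and $S_n^k$ agree on $\Span\{e_1,\dots,e_{n-k}\}$ while $S^k$ sends the remaining basis vectors into $(\bbC^n)^\perp$ and $S_n^k$ sends them to $0$, and because $\K_n\subseteq\H\otimes\bbC^n$ forces $P_{\K_n}=P_{\K_n}P_{\H\otimes\bbC^n}$, the two compressions coincide on every monomial and hence, by linearity and continuity, on all of $\T_X^+$.
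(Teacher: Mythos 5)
Your proposal is correct and follows essentially the same route as the paper's proof: invariance of the spanning vectors under the generators and their adjoints via the covariance relation, the isometric relation $t_\infty(x)^*t_\infty(y)=\rho_\infty(\langle x,y\rangle)$, the vanishing of length-$n$ words, and the annihilation of $\H\otimes e_1$ by the adjoints of the creation operators; the exhaustion and compression claims are handled the same way (the paper simply states these follow immediately, while you supply the details). No gaps.
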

\begin{proof}
Clearly, $\K_n$ is invariant for $t_\infty(X)\otimes S_n$ as 
\[
(t_\infty(x_1)\otimes S_n)\dots(t_\infty(x_n)\otimes S_n) = 0
\]
for any $x_1,\dots, x_n\in X$. The conclusion that $\K_n$ is reducing comes from the following relationships, for any $c\in\C$ and $x_1,x_2\in X$,
\begin{align*}
(\rho_\infty(c)\otimes I_n)(t_\infty(x_1)\otimes S_n) & = t_\infty(\varphi(c)x_1)\otimes S_n, 
\\ (t_\infty(x_1)\otimes S_n)^*(t_\infty(x_2)\otimes S_n) & = \rho_\infty(\langle x_1,x_2\rangle)\otimes I_n,
\\ \rho_\infty(\C)\otimes I_n (\H\otimes e_1) & \subseteq \H\otimes e_1,\quad\text{and}
\\ (t_\infty(X)\otimes S_n)^*(\H\otimes e_1) & = 0\,.
\end{align*}
The last conclusion then follows immediately.
\end{proof}

We now have all the tools in place to produce meets of $\rC^*$-covers that fail to be residually finite-dimensional.

\begin{theorem}\label{t:fin-tens-meet}
Suppose $(X,\C)$ is a finite C*-correspondence with $\C$ unital and that there is $u\in X$ such that $\langle u,u\rangle = I$. 
If 
\[
\pi_n := P_{\K_n} (\rho_\infty  \otimes I_n) \rtimes (t_\infty  \otimes S_n)P_{\K_n}
\]
then
\[
 \ \bigwedge_{m\in \mathbb N} \left[ \rC^*\left(\left(\bigoplus_{n\geq m} \pi_n\right)(\T_X^+)\right), \ \bigoplus_{n\geq n} \pi_n\right] = \left[ \varinjlim_{m\rightarrow \infty} \rC^*\left(\left(\bigoplus_{n\geq m} \pi_n\right)(\T_X^+)\right), \ \pi\right] \
\]
is not an RFD C$^*$-cover of $\T_X^+$.
\end{theorem}
\begin{proof}
Let $(\rho,t)$ be the completely contractive representation of $(X,\C)$ such that $\pi = \rho\rtimes t$. In particular, we are interested in 
\[
t(u) = \varinjlim_{m\rightarrow \infty} \bigoplus_{n\geq m} P_{\K_n}(t_\infty(u) \otimes S_n) P_{\K_n}\,.
\]
Now $t_\infty(u)$ is an isometry since 
\[
t_\infty(u)^*t_\infty(u) = \rho_\infty(\langle u,u\rangle) = \rho_\infty(I) = I\,.
\]
This implies that $t(u)$ is a partial isometry since every $V_n = P_{\K_n}(t_\infty(u)  \otimes S_n)P_{\K_n}$ is a partial isometry. In what follows, we leverage the parital isometry $t(u)$ to construct an (infinite-dimensional) $\rC^*$-subalgebra of \[\varinjlim_{m\rightarrow \infty} \rC^*\left(\left(\bigoplus_{n\geq m} \pi_n\right)(\T_X^+)\right)\]that is $*$-isomorphic to the compact operators.

To this end, consider the C*-algebra
\[
\D_n \ := \ C^*(P_{\K_n}(t_\infty(x_1)\otimes S_n)(t_\infty(x_2)\otimes S_n)^* : x_1,x_2\in X),
\]
which is finite-dimensional since it is a subalgebra of $\rC^*(\pi_n(\T_X^+))$.
Upon observing that
\begin{align*}
(t_\infty(x_1)\otimes S_n)(t_\infty(x_2)\otimes S_n)^*&(t_\infty(y_1)\otimes S_n)(t_\infty(y_2)\otimes S_n)^*
\\ \ & = \ (t_\infty(x_1)\otimes S_n)(\rho_\infty(\langle x_2, y_1\rangle) \otimes (I - E_{nn}))(t_\infty(y_2)\otimes S_n)^*
\\ \ & = \ (t_\infty(x_1\langle x_2,y_1\rangle)\otimes S_n)(t_\infty(y_2)\otimes S_n)^*,
\end{align*}
we can conclude that 
\begin{align*}
\D_n & = \Span\{P_{\K_n}(t_\infty(x_1)\otimes S_n)(t_\infty(x_2)\otimes S_n)^* : x_1,x_2\in X\}
\\ & = P_{\K_n}\Big(\Span\{t_\infty(x_1)t_\infty(x_2)^* : x_1,x_2\in X\} \otimes (I_n - E_{11})\Big) =: P_{\K_n}(\D \otimes (I_n - E_{11}))\,.
\end{align*}
This means there exists $u_1,\dots,u_m,v_1,\dots,v_m \in X$ such that
\[
I_{\D} = \sum_{i=1}^m t_\infty(u_i)t_\infty(v_i)^*.
\]
As well, $I_{\D_n} = P_{\K_n}(I_\D \otimes (I_n - E_{11}))$.
Now let 
\[
P_n = P_{\K_n} - I_{\D_n} = P_{\K_n} - I_\D \otimes (I_n - E_{11}) \neq 0\,.
\]
Thus, 
\[
P = \rho(I) - \sum_{i=1}^m t(u_i)t(v_i)^* = \varinjlim_{m\rightarrow \infty} \bigoplus_{n\geq m} P_n \neq 0
\]
is a proper projection. 

Now
\begin{align*}
I_{\D_n}(t_\infty(y)\otimes S_n) & = I_{\D_n}(t_\infty(y)\otimes S_n)(I \otimes (I-E_{nn})
\\ & = \Big(I_{\D_n}(t_\infty(y)\otimes S_n)(t_\infty(u)\otimes S_n)^*\Big)(t_\infty(u)\otimes S_n)
\\ & = P_{\K_n}(t_\infty(y)\otimes S_n)(t_\infty(u)\otimes S_n)^*(t_\infty(u)\otimes S_n)
\\ & = P_{\K_n}(t_\infty(y)\otimes S_n)\,.
\end{align*}
Hence, $P_n(\H\otimes e_1)^\perp = (P_{\K_n} - I_{\D_n})(\H \otimes e_1)^\perp = 0$ and so $P(\H\otimes e_1)^\perp = 0$.

We are finally in a position to make our final calculations. By above $(t_\infty(u)\otimes S_n)^*P_n = 0$ and so 
\begin{align*}
t(u)^*P = 0\,.
\end{align*}
Additionally, for $1\leq k< n$,
\begin{align*}
(t_\infty(u)^*\otimes S_n^*)^k(t_\infty(u)\otimes S_n)^kP_n
& = (I \otimes (E_{11} + \dots + E_{n-k, n-k})P_n = P_n
\end{align*}
and so for all $k\geq 1$,
\[
(t(u)^*)^kt(u)^kP = P\,.
\]
This then gives us a system of matrix units $u_{ij} = t(u)^{i-1}P(t(u)^*)^{j-1} \in C^*(\pi(\T_X^+))$, as desired. Therefore, $C^*(\pi(\T_X^+))$ is not RFD as it has a subalgebra that is $*$-isomorphic to the compact operators.
\end{proof}

We now state the main result of this section.

\begin{theorem}\label{t:tens-lat}
Let $(X,\C)$ be a finite C*-correspondence where $\C$ is a unital C*-algebra. If there is $u\in X$ such that $\langle u,u\rangle = I$, then the space of residually finite-dimensional C*-covers for $\T_X^+$ do not form a complete lattice.
\end{theorem}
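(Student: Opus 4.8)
The plan is to obtain Theorem \ref{t:tens-lat} as a direct consequence of Theorem \ref{t:fin-tens-meet}, once the chain of covers appearing there is certified to consist of RFD covers. By Proposition \ref{p:rfd-join} the RFD C*-covers of $\T_X^+$ already form a complete join-semilattice inside $\cstarlattice(\T_X^+)$, so the remaining task is to exhibit a family of RFD covers whose infimum (meet) in $\cstarlattice(\T_X^+)$ escapes the RFD covers. The hypotheses imposed here ($\C$ unital and a unit vector $u\in X$ with $\langle u,u\rangle = I$) are exactly those of Theorem \ref{t:fin-tens-meet}, so the family $\F = \{[\B_m,\iota_m]\}_{m\in\bbN}$ with $\B_m = \rC^*((\bigoplus_{n\geq m}\pi_n)(\T_X^+))$ is available. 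It is a decreasing chain, hence downward directed: dropping the $n=m$ block compresses $\bigoplus_{n\geq m}\pi_n$ to a reducing subspace and so realizes $\B_{m+1}$ as a quotient of $\B_m$, i.e. $[\B_{m+1},\iota_{m+1}]\preceq[\B_m,\iota_m]$.

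First I would verify that every member of $\F$ is an RFD cover, which is the small amount of work not already contained in the cited results. For fixed $n$, the representation $\pi_n = P_{\K_n}(\rho_\infty\otimes I_n)\rtimes(t_\infty\otimes S_n)P_{\K_n}$ is the compression of $(\rho_\infty\otimes I_n)\rtimes(t_\infty\otimes S_n)$ to the reducing subspace $\K_n$, so $\rC^*(\pi_n(\T_X^+))$ is a quotient of the finite-dimensional C*-algebra furnished by Proposition \ref{prop:finite}, and is therefore itself finite-dimensional. Consequently $\B_m$ embeds into the product $\prod_{n\geq m}\rC^*(\pi_n(\T_X^+))$ of finite-dimensional C*-algebras, which is RFD; since a C*-subalgebra of an RFD C*-algebra is again RFD, each $\B_m$ is RFD and each $[\B_m,\iota_m]$ is an RFD cover.

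With the chain certified RFD, I would invoke Proposition \ref{p:meet} to identify the meet of the downward directed family $\F$ in $\cstarlattice(\T_X^+)$ with $[\varinjlim \B_m,\pi]$, and then Theorem \ref{t:fin-tens-meet} to conclude that this meet is not an RFD cover. Thus the RFD covers, although closed under arbitrary nonempty joins by Proposition \ref{p:rfd-join}, are not closed under the meet operation of $\cstarlattice(\T_X^+)$, and therefore do not form a complete lattice. I expect the genuinely hard part of the overall argument to lie not here but inside Theorem \ref{t:fin-tens-meet}, whose proof manufactures a copy of the compact operators inside $\varinjlim \B_m$ from the partial isometry $t(u)$ and the vanishing relation $(t_\infty(u)\otimes S_n)^*P_n = 0$; the only work remaining at this stage is the reducing-subspace and finite-dimensionality bookkeeping of the previous paragraph, together with reading the lattice-theoretic conclusion correctly as a failure of closure under infima.
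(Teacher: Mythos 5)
Your argument is correct and is essentially the paper's own proof, which simply cites Proposition \ref{prop:finite} and Theorem \ref{t:fin-tens-meet}; you have merely made explicit the intermediate bookkeeping (each $\rC^*(\pi_n(\T_X^+))$ is finite-dimensional as a quotient of the algebra in Proposition \ref{prop:finite} via the reducing subspace $\K_n$, so each $\B_m$ embeds in a product of finite-dimensional C*-algebras and is RFD, and the meet of the downward directed chain is identified by Proposition \ref{p:meet} and shown non-RFD by Theorem \ref{t:fin-tens-meet}). Nothing further is needed.
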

\begin{proof}
The conclusion follows from Proposition \ref{prop:finite} and Theorem \ref{t:fin-tens-meet}.
\end{proof}

We have focused on finite $\rC^*$-correspondences, although there are other classes of tensor algebras that are known to be RFD. For example, the tensor algebra of a (countable) graph correspondence is always RFD \cite[Theorem 4.7]{CR}. It is then conceivable that one could both replace the finiteness condition with a more general constraint and remove the unitality condition entirely. On the other hand, the constraint $\langle X,X\rangle = \C$ appears somewhat necessary as the conclusion of Theorem \ref{t:tens-lat} is not even true for the algebra of upper triangular $2\times 2$ matrices.

\section{The residually finite-dimensional C*-covers of the disc algebra}\label{s:disc-alg}

In the previous section, we saw that the residually finite-dimensional $\rC^*$-covers of an operator algebra do not always form a complete meet-semilattice. In this section, we offer a stronger conclusion for a particular choice of operator algebra: the RFD $\rC^*$-covers of the disc algebra $\mathrm{A}(\mathbb{D})$ do not even form a meet-semilattice. 

On account of von Neumann's inequality \cite[Corollary 3.12]{Paulsen}, the representations of the disc algebra are in one-to-one correspondence with contractive Hilbert space operators. That is, whenever $T\in B(\H)$ is a contraction, we have a representation \[\rho_T:\mathrm{A}(\mathbb{D})\rightarrow B(\H), \quad f\mapsto f(T),\]defining a holomorphic functional calculus. In particular, a C*-cover $[\B, \iota]$ of the disc algebra $\mathrm{A}(\mathbb{D})$ has the property that $\B$ is a unital $\rC^*$-algebra generated by a single contraction $T$ and that $\iota(z) = T$. Furthermore, we note that a morphism of C*-covers $(\rC^*(T), j)\to (\rC^*(R),\kappa)$ corresponds to a $\ast$-homomorphism that maps $T$ to $R$. Thus, we make a few standing assumptions for this section. Firstly, when the context is clear, we eliminate the mention of the completely isometric embeddings $j$ and $\kappa$. In addition, the algebra $\rC^*(T)$ will denote the unital $\rC^*$-algebra generated by $T$, as we are dealing with C*-covers of the unital operator algebra $\mathrm{A}(\mathbb{D})$. 

\subsection{The C*-algebra of a punctured bilateral shift}

Let $\H=\ell^2(\bbN)$ with canonical orthonormal basis $\{e_1,e_2,e_3,\ldots\}$, and let $S\in B(\H)$ denote the unilateral shift defined by $Se_k=e_{k+1}$. The Toeplitz algebra $\rC^*(S)$ is a central candidate of a C*-cover for $\mathrm{A}(\mathbb{D})$ that is not RFD. Coburn's theorem also endows the Toeplitz algebra with a universal property: it is the universal C*-algebra generated by an isometry, and any representation of $\rC^*(S)$ mapping $S$ to a proper isometry is faithful. In this section, we derive an analogous universal property for the C*-algebra generated by the punctured bilateral shift $S\oplus S^\ast$.

\begin{remark}\label{rem:S+S*_not_RFD}
The C*-algebra $\rC^*(S\oplus S^\ast)$ contains all compact operators on $\H\oplus \H$, and so is not RFD. For instance, letting $P\in B(\H)$ denote the orthogonal projection onto $\bbC e_1$, we have
\begin{align*}
I-(S\oplus S^\ast)(S\oplus S^\ast)^\ast &=
P\oplus 0, \quad\text{and}\quad
I-(S\oplus S^\ast)^\ast (S\oplus S^\ast) =
0\oplus P
\end{align*}
are contained in $\rC^*(S\oplus S^\ast)$. A similar computation as for the Toeplitz algebra, where one applies the powers of $S\oplus S^\ast$ and its adjoint, guarantees that all matrix units for $\H\oplus\H$ lie in $\rC^*(S\oplus S^*)$ and thus, so do all compact operators on $\H\oplus\H$.
\end{remark}

The operator $S\oplus S^\ast$ is a partial isometry, but neither an isometry nor coisometry. However, $S\oplus S^*$ satisfies additional algebraic relations preventing the $\rC^*$-algebra it generates from being the universal C*-algebra generated by a partial isometry. For example, the domain and range space for $T = S\oplus S^*$ have orthogonal complements $0\oplus \bbC e_1$ and $\bbC e_1\oplus 0$ that are orthogonal. This constraint manifests itself in the algebraic relation
\[
(I-T^\ast T)(I-TT^\ast)=0.
\]
Furthermore, for any positive integers $m$ and $k$, one has $T^m(\bbC e_1\oplus 0)=\bbC e_{m+1}\oplus 0$ and $T^{\ast k} (0\oplus \bbC e_1)=0\oplus \bbC e_{k+1}$ also remain orthogonal. Algebraically, this condition translates to
\[
(T^{\ast k}(I-T^\ast T))^\ast(T^m(I-TT^\ast)) =
(I-T^\ast T)T^{m+k} (I-TT^\ast)=0.
\]This brings us to an appropriate analogue of Coburn's Theorem for the punctured bilateral shift. In what follows, we denote the range projection of a partial isometry $T$ by $p(T):=T T^\ast$, and the domain projection by $q(T):=T^\ast T$. Furthermore, we say that a partial isometry $T$ is \emph{proper} if it is nether an isometry nor a coisometry. 

\begin{proposition}\label{prop:S+S*}
The C*-algebra $\rC^*(S\oplus S^\ast)$ is the universal unital C*-algebra generated by a partial isometry $T$ 
that satisfies
\begin{equation}\label{eq:S+S*_relation}
q(T)^\perp T^n p(T)^\perp = 0
\end{equation}
for all $n\ge 0$. Furthermore, any $\ast$-homomorphism of $\rC^*(S\oplus S^*)$ that maps $S\oplus S^*$ to a proper partial isometry is faithful.
\end{proposition}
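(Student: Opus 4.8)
The plan is to prove the two assertions in tandem, the common input being the ideal structure of $B := \rC^*(S\oplus S^\ast)$. First I would record the short exact sequence
\[
0 \longrightarrow K(\H)\oplus K(\H) \longrightarrow B \longrightarrow \rC(\bbT)\longrightarrow 0.
\]
Since $T := S\oplus S^\ast$ is block diagonal, every element of $B$ is block diagonal, and the matrix units produced in Remark~\ref{rem:S+S*_not_RFD} from $p(T)^\perp = P\oplus 0$ and $q(T)^\perp = 0\oplus P$ exhaust the block-diagonal compacts $K(\H)\oplus K(\H)$; the coordinatewise symbol map $\rC^*(S)\oplus\rC^*(S)\to\rC(\bbT)\oplus\rC(\bbT)$ sends $T$ to a unitary with full spectrum, identifying the quotient with $\rC(\bbT)$. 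I would note for later that $K(\H)\oplus 0$ and $0\oplus K(\H)$ are the two minimal ideals of $B$ and that $K(\H)\oplus K(\H)$ is essential, so every nonzero ideal of $B$ contains $K(\H)\oplus 0$ or $0\oplus K(\H)$.

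Granting this, the faithfulness statement is quick. Let $\psi$ be a $\ast$-homomorphism with $\psi(T)$ a proper partial isometry, and suppose $\ker\psi\neq 0$, so that $\ker\psi$ contains $K(\H)\oplus 0$ or $0\oplus K(\H)$. In the first case $\psi(p(T)^\perp)=0$, whence $\psi(T)\psi(T)^\ast = \psi(I)$ and $\psi(T)$ is a coisometry; in the second case $\psi(q(T)^\perp)=0$, whence $\psi(T)^\ast\psi(T)=\psi(I)$ and $\psi(T)$ is an isometry. Either conclusion contradicts properness, so $\ker\psi = 0$.

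For the universal property, let $A$ be the universal unital $\rC^*$-algebra generated by a partial isometry $T$ subject to \eqref{eq:S+S*_relation}; the relations verified before the statement yield a surjection $\phi: A\to B$ with $\phi(T)=S\oplus S^\ast$, and it remains to see $\phi$ is injective. I would mimic the construction in the proof of Theorem~\ref{t:fin-tens-meet}, building two systems
\[
a_{ij} := T^{i-1}p(T)^\perp (T^\ast)^{j-1},\qquad b_{ij} := (T^\ast)^{i-1}q(T)^\perp T^{j-1}\qquad (i,j\ge 1)
\]
inside $A$. The key point is that \eqref{eq:S+S*_relation} is exactly what forces the forward orbit of the cokernel to behave like a shift: a short induction shows $r_m := T^m p(T)^\perp (T^\ast)^m \le q(T)$, using $q(T)^\perp T^m p(T)^\perp = 0$ at the inductive step, so that $T^m p(T)^\perp$ is a partial isometry with initial projection $p(T)^\perp$; together with $p(T)^\perp T = 0$ this yields $a_{ij}a_{kl}=\delta_{jk}a_{il}$, and symmetrically for the $b_{ij}$. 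The adjoint relation $p(T)^\perp (T^\ast)^n q(T)^\perp = 0$ makes the two families orthogonal, $a_{ij}b_{kl}=b_{kl}a_{ij}=0$. Because $\phi(a_{ij})$ and $\phi(b_{ij})$ are nonzero matrix units in $B$, each $a_{ij}$ and $b_{ij}$ is nonzero, so the two families generate a copy of $K(\H)\oplus K(\H)$, which is precisely the ideal $\J$ of $A$ generated by the defect projections; moreover $\phi$ restricts to an isomorphism of $\J$ onto $K(\H)\oplus K(\H)\subseteq B$.

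Finally I would compute the quotient: in $A/\J$ the image of $T$ has vanishing defects, hence is a unitary subject to no further relations, so $A/\J\cong\rC(\bbT)$ and $\phi$ induces the identity on quotients. The short five lemma applied to $\phi$ between the two exact sequences then forces $\phi$ to be an isomorphism. The main obstacle is this middle step: confirming in the \emph{abstract} universal algebra that the two matrix-unit systems close up correctly and that the defect ideal is exactly $K(\H)\oplus K(\H)$ rather than a proper quotient — this is where both the nonzero-ness supplied by $\phi$ and the precise strength of \eqref{eq:S+S*_relation} are essential.
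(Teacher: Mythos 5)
Your proof is correct, but it takes a genuinely different route from the paper. The paper's argument is spatial: it first establishes a Wold-type decomposition (Proposition \ref{prop:Wold_partial_iso}) showing that any partial isometry satisfying \eqref{eq:S+S*_relation} is unitarily equivalent to $S^{\oplus\alpha}\oplus (S^\ast)^{\oplus\beta}\oplus U$ with $U$ unitary, and then builds the required morphism $\rC^*(S\oplus S^\ast)\to \rC^*(T)$ by combining coordinatewise identity maps with the factorization through the C*-envelope $C(\bbT)$ for the unitary summand; properness means $\alpha,\beta\neq 0$, which supplies the inverse morphism. You instead work algebraically inside the abstract universal algebra $A$, exhibit its defect ideal as $K(\H)\oplus K(\H)$ via explicit matrix units, identify the quotient with $C(\bbT)$, and conclude by the five lemma, with faithfulness read off from the ideal lattice of $\rC^*(S\oplus S^\ast)$. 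Your central computation is sound: the induction giving $p(T)^\perp (T^{\ast})^{k}T^{k} = p(T)^\perp$ uses exactly the relation $q(T)^\perp T^k p(T)^\perp=0$ (and its adjoint), and $p(T)^\perp T=0$ then forces the orthogonality $p(T)^\perp (T^\ast)^k T^j p(T)^\perp=\delta_{jk}p(T)^\perp$, so the $a_{ij}$, $b_{ij}$ really are two mutually orthogonal matrix-unit systems spanning an ideal; this is precisely the algebraic shadow of the paper's Wold decomposition. Two small points to tighten: the compact ideal is the block-diagonal $K(\H)\oplus K(\H)$ rather than $K(\H\oplus\H)$ (you have this right, and it is the correct reading of Remark \ref{rem:S+S*_not_RFD}); and the phrase ``unitary subject to no further relations'' for $A/\J$ deserves a sentence — either observe that $\J$ is exactly the ideal generated by the two defect projections, so $A/\J$ inherits the universal property of a single unitary, or simply note that the induced surjection $A/\J\to C(\bbT)$ sends the generator to $z$, forcing its spectrum to be all of $\bbT$ and the map to be injective. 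What the paper's route buys is an explicit structure theorem for every concrete representation of the relations; what yours buys is the full ideal structure of $\rC^*(S\oplus S^\ast)$ and a proof that never leaves the universal algebra.
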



The proof of Proposition \ref{prop:S+S*} requires an extension of the Wold decomposition of an isometry. In fact, this recovers a special case of Halmos and Wallen's work on the decomposition of power partial isometries \cite{HW}. This follows as a partial isometry $T$ that satisfies Equation \eqref{eq:S+S*_relation} is automatically a \emph{power partial isometry}, in the sense that $T^n$ is a partial isometry for each $n\ge 1$. Indeed, upon expanding and rearranging Equation \eqref{eq:S+S*_relation},
\[
T^\ast T^{n+2} T^\ast =
T^\ast T^{n+1} + T^{n+1}T^\ast - T^n.
\]
Multiplying both sides by $T^{\ast(n+1)}$ gives
\begin{align*}
T^{\ast (n+2)} T^{(n+2)} T^{\ast (n+2)} &=
T^\ast (T^{\ast (n+1)}T^{(n+1)}T^{\ast (n+1)}) + 
 (T^{\ast (n+1)}T^{(n+1)}T^{\ast (n+1)})T^\ast \\&\qquad-
 T^\ast (T^{\ast n}T^nT^{\ast n})T^\ast.
\end{align*}
Applying induction on $n\ge 0$, it follows that $T^{\ast n}T^n T^{\ast n}=T^{\ast n}$ for all $n\ge 0$, and therefore $T$ is a power partial isometry. With that being said, as we do not require the full strength of Halmos and Wallen's work, we choose to include the details for a more self-contained proof.


\begin{proposition}\label{prop:Wold_partial_iso}
Let $\H$ be any Hilbert space and let $T\in B(\H)$ be a partial isometry satisfying $q(T)^\perp T^n p(T)^\perp=0$ for every $n\ge 0$. Then there is a direct sum decomposition $\H=\K\oplus \J\oplus \L$ into reducing subspaces for $T$, with respect to which there is a unitary equivalence
\[
T\simeq
S^{\oplus \alpha} \oplus
(S^\ast)^{\oplus \beta} \oplus
U
\]
where $\alpha$ and $\beta$ are some cardinals (possibly vacuous or infinite), and $U$ is unitary (whenever $\L$ is non-zero).
\end{proposition}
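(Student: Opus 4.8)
The plan is to build the three reducing subspaces directly from the two ``defect'' subspaces $\E:=\ker T^*=\ran p(T)^\perp$ and $\F:=\ker T=\ran q(T)^\perp$, realizing the forward-shift part as the orbit of $\E$ under powers of $T$ and the backward-shift part as the orbit of $\F$ under powers of $T^*$. The first thing I would record is a symmetry that halves the work: since $q(T^*)^\perp=p(T)^\perp$ and $p(T^*)^\perp=q(T)^\perp$, taking adjoints of \eqref{eq:S+S*_relation} shows $T^*$ satisfies the identical relation $q(T^*)^\perp(T^*)^n p(T^*)^\perp=0$. Hence every statement proved for the pair $(T,\E)$ transfers verbatim to $(T^*,\F)$.

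The computational heart is the consequence of \eqref{eq:S+S*_relation} that $T^{\ast n}T^n\xi=\xi$ for every $\xi\in\E$ and every $n\ge 0$. I would prove this by induction on $n$: the inductive step uses that $T^n\xi\in(\ker T)^\perp=\ran(T^*T)$ — which is exactly what the relation asserts applied to $\xi\in\ran p(T)^\perp$ — so the projection $T^*T$ fixes $T^n\xi$ and $T^{\ast(n+1)}T^{n+1}\xi=T^{\ast n}(T^*T)T^n\xi=T^{\ast n}T^n\xi=\xi$. From this I read off, for $\xi,\eta\in\E$ and $n\ge m\ge 0$, using self-adjointness of the projection $T^{\ast m}T^m$ and that it fixes $\eta$,
\[
\langle T^n\xi,T^m\eta\rangle=\langle (T^{\ast m}T^m)T^{n-m}\xi,\eta\rangle=\langle T^{n-m}\xi,\eta\rangle,
\]
which vanishes for $n>m$ (peeling off one $T$ onto $T^*\eta=0$) and equals $\langle\xi,\eta\rangle$ for $n=m$. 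Thus the subspaces $\{T^n\E\}_{n\ge0}$ are mutually orthogonal isometric copies of $\E$, and with $\K:=\cspan\{T^n\xi:\xi\in\E,\ n\ge0\}$ the restriction $T|_\K$ is a unilateral shift of multiplicity $\alpha:=\dim\E$. The same calculation for $T^*$ and $\F$ yields $\J:=\cspan\{T^{\ast n}\zeta:\zeta\in\F,\ n\ge0\}$ with $T|_\J\simeq (S^*)^{\oplus\beta}$, $\beta:=\dim\F$.

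I would then verify the invariance/orthogonality facts that assemble the decomposition. That $\K$ reduces $T$ follows since $T\K\subseteq\K$ is clear, while $T^*\xi=0$ and, for $n\ge1$, $T^*T^n\xi=(T^*T)T^{n-1}\xi=T^{n-1}\xi$ (again using $T^{n-1}\xi\in(\ker T)^\perp$) give $T^*\K\subseteq\K$; symmetrically $\J$ reduces $T$. For $\K\perp\J$ I would compute, for $\xi\in\E$, $\zeta\in\F$, that $\langle T^n\xi,T^{\ast m}\zeta\rangle=\langle T^{m+n}\xi,\zeta\rangle=0$, since the relation forces $T^{m+n}\xi\in(\ker T)^\perp$ whereas $\zeta\in\ker T$. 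Setting $\L:=(\K\oplus\J)^\perp$, which is then reducing, it remains to see $U:=T|_\L$ is unitary: it is a partial isometry, and $\ker(T|_\L)=\L\cap\ker T\subseteq\L\cap\J=0$ together with $\ker(T^*|_\L)=\L\cap\ker T^*\subseteq\L\cap\K=0$ give trivial kernel and cokernel, so $U$ is an isometry with dense — hence full — range, i.e.\ unitary.

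The step I expect to carry the real weight is the inductive identity $T^{\ast n}T^n|_\E=\mathrm{id}$, since this is precisely where the hypothesis does its work: it says the forward orbit of the cokernel stays isometric for all time and never re-enters $\ker T$. This is exactly what excludes the truncated (nilpotent) shift summands that would otherwise appear in the general Halmos--Wallen decomposition of a power partial isometry, leaving only pure forward shifts, pure backward shifts, and a unitary. Everything else is bookkeeping resting on this identity and its adjoint.
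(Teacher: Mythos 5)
Your proof is correct and follows essentially the same route as the paper's: the same subspaces $\K=\cspan_n T^n(\ker T^\ast)$, $\J=\cspan_n T^{\ast n}(\ker T)$, $\L=(\K\oplus\J)^\perp$, with the same orthogonality, reducibility, and wandering-subspace computations, merely organized around the inductively proved identity $T^{\ast n}T^n\vert_{\ker T^\ast}=\mathrm{id}$ rather than direct manipulation of the defect projections. No gaps.
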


\begin{proof}
Let $\W=p(T)^\perp \H$ and $\V = q(T)^\perp \H=p(T^\ast)^\perp \H$. Now, we set
\[
\K:=
\bigoplus_{n\ge 0} T^n \W,\qquad
\J:=
\bigoplus_{n\ge 0} T^{\ast n} \V,
\]
and let $\L:= (\K\oplus\J)^\perp$. For $m>n\ge 0$ and $x,y\in W$, we have
\[
\langle T^m x,T^ny\rangle =
\langle x,T^{\ast m}T^n(I-TT^\ast)y\rangle = 0
\]
upon applying the identity $T^\ast TT^\ast = T^\ast$. This then guarantees that the sum that defines $\K$ is, in fact, direct. A symmetric argument with $T^\ast$ shows that $\J$ also defines a direct sum.

We claim that $\K$ and $\J$ are mutually orthogonal reducing subspaces for $T$. To see orthogonality, let $m,n\ge 0$ be positive integers, $x\in \W$, and $y\in \V$. Then we have
\[
\langle T^mx,T^{\ast n} y\rangle =
\langle T^{m+n}x,y\rangle =
\langle T^{m+n}p(T)^\perp x,q(T)^\perp y\rangle = 0
\]
as $q(T)^\perp T^{m+n}p(T)^\perp=0$, which guarantees that $\K$ and $\J$ are orthogonal. To see that $\K$ and $\J$ are reducing for $T$, first note that $\K$ is clearly $T$-invariant. To see that it is $T^\ast$-invariant, let $x\in \W$ and $n\ge 0$. Then
\[
T^\ast T^n x=
T^\ast T^n(I-TT^\ast)x.
\]
If $n=0$, the identity $T^\ast = T^\ast TT^\ast$ implies that $T^\ast x=0$. If $n\ge 1$, then
\begin{align*}
T^\ast T^n x &= 
(T^\ast T)T^{n-1}p(T)^\perp x \\ &=
(I-q(T)^\perp)T^{n-1}p(T)^\perp x \\ &=
T^{n-1}p(T)^\perp x \in \K
\end{align*}
because $q(T)^\perp T^{n-1}p(T)^\perp = 0$ by assumption. That is, $\K$ is $T^\ast$-invariant as desired. A symmetric argument with $T^\ast$ in place of $T$ shows that $\J$ is reducing for $T^\ast$ and thus reducing for $T$ as well.

Finally, as in the proof of the classical Wold decomposition, one has that $T\vert_\K$ is an isometry and that $\W$ is a wandering subspace for $T\vert_\K$. In particular, we have $T\vert_\K\simeq S^{\oplus \alpha}$ where $\alpha=\dim(\W)$. Symmetrically, the same argument for $T^\ast$ on $\J$ shows that $T^\ast\vert_\J\simeq S^{\oplus \beta}$ where $\beta=\dim(\V)$, and that $T\vert_\J\simeq (S^\ast)^{\oplus \beta}$. Lastly, it automatically follows that $\L=(\K\oplus \J)^\perp$ is reducing for $T$ and, as both $p(T)^\perp=I-TT^\ast$ and $q(T)^\perp = I-T^\ast T$ restrict to the zero operator on $\L$, we must have $U:=T\vert_\L$ is unitary whenever $\L$ is non-zero.
\end{proof}

As mentioned above, Proposition \ref{prop:Wold_partial_iso} can instead be recovered using the main result in \cite{HW}, whereby any power partial isometry can be decomposed into a direct sum of a unitary, $S$, $S^\ast$ and truncated shifts $P_{\bbC^m}S\vert_{\bbC^m}$, with multiplicities. From this, it suffices to show that the truncated shifts do not satisfy Equation \eqref{eq:S+S*_relation}. This is a direct calculation and, in fact, a keen-eyed reader will see this occur below, embedded in the proof of Proposition \ref{prop:infinite_compression}.

\begin{proof}[Proof of Proposition \ref{prop:S+S*}]
Suppose that $T$ is a partial isometry satisfying the relation \eqref{eq:S+S*_relation} for every $n$. Proposition \ref{prop:Wold_partial_iso} shows that, up to a unitary equivalence, $T=S^{\oplus \alpha}\oplus (S^\ast)^{\oplus \beta}\oplus U$ for some cardinals $\alpha$ and $\beta$ and a unitary $U$.

Since $C(\bbT)=\rC^*(z)$ is the C*-envelope of $\mathrm{A}(\mathbb{D})$, there is a morphism $\rC^*(S\oplus S^\ast)\to C(\bbT)$ of $\rC^*$-covers mapping $S\oplus S^\ast$ to $z$. Furthermore, continuous functional calculus yields a $\ast$-homomorphism $C(\bbT)\to \rC^*(U)$ mapping $z$ to $U$, which then provides us with a $\ast$-homomorphism $\pi:\rC^*(S\oplus S^\ast)\to \rC^*(U)$ such that $\pi(S\oplus S^\ast)=U$. Thus, we have a $\ast$-homomorphism
\[
\rC^*(S\oplus S^\ast)\xrightarrow{(\id^{\oplus \alpha}\oplus \id^{\oplus \beta})\oplus \pi} \rC^*(S^{\oplus \alpha}\oplus (S^\ast)^{\oplus \beta}\oplus U)
\]
where $S\oplus S^\ast$ is mapped to $T$. In particular, this is also a morphism of $\rC^*$-covers for $\mathrm{A}(\bbD)$.

Now, assume that $T$ is a proper partial isometry, which occurs if and only if $\alpha\ne 0$ and $\beta\ne 0$. Then, the $\ast$-homomorphism
\[
\rC^*(S^{\oplus \alpha}\oplus (S^\ast)^{\oplus \beta}\oplus U)\longrightarrow \rC^*(S\oplus S^\ast)
\]
that maps $T$ to $S\oplus S^\ast$ is an inverse to $\id^{\oplus \alpha}\oplus \id^{\oplus \beta}\oplus U$. Therefore, the unique homomorphism $\rC^*(S\oplus S^\ast)\to \rC^*(T)$ that maps $S\oplus S^\ast$ to $T$ is faithful.
\end{proof}

\subsection{The semilattice of RFD C*-covers for the disc algebra}

As above, let $S$ be the unilateral shift on $\H=\overline{\text{span}}\{e_1,e_2,e_3,\ldots\}$. In addition, for $n\ge0$, set $\H_n=\text{span}\{e_1,\ldots,e_n\}\simeq \bbC^n$ and let $P_n$ be the projection onto $\H_n$. In this section, we consider $\rC^*$-covers for $\mathrm{A}(\bbD)$ arising from the truncated unilateral shifts
\[
S_n:=
P_n S\vert_{\H_n}= 
\begin{bmatrix} 
    0 & & & 0 \\
    1 & 0 \\ 
    &\ddots&\ddots \\ 
    0 &&1&0 
\end{bmatrix}.
\]
Since $\H_n$ is coinvariant for $S$, the compression map $B(\H)\rightarrow M_n$ restricts to a representation of $\overline{\text{alg}}(I,S)\simeq\mathrm{A}(\mathbb{D})$. Furthermore, the $\rC^*$-algebra generated by the infinite direct sum
\[
\rC^*\left(\bigoplus_{n\ge 1} S_n\right)\subseteq
\prod_{n\ge 1}M_n
\]
gives rise to a C*-cover of $\mathrm{A}(\mathbb{D})$, which is RFD by design. In fact, the same is true if we consider the direct sum of the truncated shift operators against any increasing subsequence of $\bbN$.


\begin{proposition}\label{prop:infinite_compression}
Let $X$ be an infinite subset of $\bbN$, and consider the C*-algebra
\[
\rC^*\left(\bigoplus_{n\in X} S_n\right) \subseteq
\prod_{n\in X} M_n.
\]Then the following statements hold.
\begin{itemize}
\item [(1)] The C*-algebra $\rC^*(\bigoplus_{n \in X} S_n)$ is an RFD C*-cover of $\mathrm{A}(\mathbb{D})$.
\item [(2)] The ideal
\[
\bigoplus_{n\in X} M_n
\]
is contained in $\rC^*(\bigoplus_{n \in X} S_n)$.
\item [(3)] The ideal $\bigoplus_{n \in X}M_n$ is a boundary ideal for $\mathrm{A}(\mathbb{D})$. Furthermore, we have
\[
\frac{\rC^*\left(\bigoplus_{n\in X} S_n\right)}{\bigoplus_{n\in X}M_n} \simeq
\rC^*(S\oplus S^\ast)
\]
via a generator-preserving $*$-isomorphism. In particular, these C*-algebras determine equivalent C*-covers for $\mathrm{A}(\bbD)$.
\end{itemize}
\end{proposition}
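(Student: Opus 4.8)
The plan is to dispatch (1) quickly and to route the substance of (3) through (2) together with the universal property recorded in Proposition \ref{prop:S+S*}. For (1), each $S_n$ is a contraction and $\H_n$ is coinvariant for $S$, so $f\mapsto \bigoplus_{n\in X} f(S_n)=\bigoplus_{n\in X} P_n f(S)\vert_{\H_n}$ is a completely contractive representation of $\mathrm{A}(\bbD)\simeq \overline{\text{alg}}(I,S)$. It is completely isometric because $\|P_n B P_n\|\to \|B\|$ as $P_n\uparrow I$, so the supremum over the infinite set $X$ of the compressed matricial norms recovers $\|[f_{ij}(S)]\|$, which already computes the norm on $M_k(\mathrm{A}(\bbD))$. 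Residual finite-dimensionality is then automatic, since $\rC^*(\bigoplus_{n\in X}S_n)$ is a $\rC^*$-subalgebra of the RFD $\rC^*$-algebra $\prod_{n\in X}M_n$.

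Write $T=\bigoplus_{n\in X}S_n$ and enumerate $X=\{n_1<n_2<\cdots\}$. The crux is (2): isolating a single matrix block inside the singly generated algebra $\rC^*(T)$. Since $S_{n_i}$ is nilpotent of index exactly $n_i$, a direct computation gives that $(T^\ast)^k T^k$ is the diagonal projection onto $\bigoplus_{n_i>k}\Span\{e_1,\dots,e_{n_i-k}\}$. Hence the telescoping difference
\[
D_j:=(T^\ast)^{n_j-1}T^{n_j-1}-(T^\ast)^{n_j}T^{n_j}=\bigoplus_{n_i\ge n_j}E^{(n_i)}_{\,n_i-n_j+1,\;n_i-n_j+1}
\]
is a projection in $\rC^*(T)$ that is rank one at a fixed distance $n_j-1$ below the top of every block of size at least $n_j$, and vanishes on the smaller blocks. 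The key observation is that multiplying $D_j$ by the bottom projection $I-TT^\ast=\bigoplus_{n_i}E^{(n_i)}_{11}$ annihilates every block of size $>n_j$ (where $D_j$ sits strictly above the bottom) and retains only $f_j:=D_j(I-TT^\ast)=E^{(n_j)}_{11}$, the rank-one bottom projection supported on the single block $n_j$. Applying powers of $T$ yields $T^k f_j=E^{(n_j)}_{k+1,1}$ for $0\le k<n_j$, which together with their adjoints and products furnish a full system of matrix units for block $n_j$; thus $M_{n_j}\subseteq \rC^*(T)$. Closing over all $j$ gives $\bigoplus_{n\in X}M_n\subseteq \rC^*(T)$.

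For (3) I would pass to the generalized Calkin algebra $\mathcal{Q}:=\prod_{n\in X}M_n/\bigoplus_{n\in X}M_n$ and study the image $\dot T$ of $T$. As each $S_n$ is a partial isometry, so is $\dot T$, and a short computation shows $q(T)^\perp T^n p(T)^\perp=\bigoplus_{n_i=n+1}E^{(n_i)}_{n_i,1}$ is finite rank, hence $q(\dot T)^\perp \dot T^{\,n} p(\dot T)^\perp=0$ in $\mathcal{Q}$ for all $n\ge 0$. Moreover $q(\dot T)^\perp$ and $p(\dot T)^\perp$ are nonzero, since $\bigoplus_{n_i}E^{(n_i)}_{n_i,n_i}$ and $\bigoplus_{n_i}E^{(n_i)}_{11}$ are not compact; so $\dot T$ is a proper partial isometry satisfying \eqref{eq:S+S*_relation}. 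Proposition \ref{prop:S+S*} then yields a generator-preserving $\ast$-isomorphism $\rC^*(S\oplus S^\ast)\xrightarrow{\sim}\rC^*(\dot T)$. By (2) we have $\bigoplus_{n\in X}M_n\subseteq \rC^*(T)$, so the image of $\rC^*(T)$ in $\mathcal{Q}$ is exactly $\rC^*(T)/\bigoplus_{n\in X}M_n=\rC^*(\dot T)$, giving the claimed quotient. Finally, $\bigoplus_{n\in X}M_n$ is a boundary ideal because the quotient map is a morphism of $\rC^*$-covers: the embedding $z\mapsto S\oplus S^\ast$ is completely isometric, as compressing to the reducing summand $\H\oplus 0$ recovers the completely isometric representation $f\mapsto f(S)$.

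The main obstacle is (2): in a singly generated $\rC^*$-algebra the obvious spectral and telescoping manipulations of $T$ produce projections supported on cofinitely many blocks (hence never compact), so isolating one finite block is not immediate. The device that breaks the symmetry is the product $D_j(I-TT^\ast)$, which selects the unique block in which the telescoped projection $D_j$ descends all the way to the bottom vector. The remaining steps for (3)---verifying \eqref{eq:S+S*_relation} holds modulo $\bigoplus_{n\in X}M_n$ and that $\dot T$ is proper---are then routine once the relevant finite-rank computations are in hand.
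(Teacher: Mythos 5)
Your proposal is correct, and parts (1) and (3) follow essentially the paper's route: (3) in particular is the same argument --- verify that the coset of $\bigoplus_{n\in X}S_n$ is a proper partial isometry whose defect projections stay orthogonal under powers (the error terms being supported in at most one block, hence in the ideal), then invoke Proposition \ref{prop:S+S*}. Where you genuinely diverge is part (2). The paper first reduces to $X=\bbN$ via the coordinate projection, then builds the tail identity projections $0\oplus\cdots\oplus 0\oplus I_{k+1}\oplus I_{k+2}\oplus\cdots$ by applying continuous functional calculus (a function $g$ with $g(0)=0$, $g\equiv 1$ on $[1,\infty)$) to the positive element $f_k\bigl(\bigoplus_n S_n\bigr)=\sum_{j=0}^{k-1}T^j(T^{\ast k}T^k)T^{\ast j}$, and recovers the central block projections $Q_k$ by subtraction. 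You instead isolate a single block purely algebraically: the telescoped projection $D_j=(T^\ast)^{n_j-1}T^{n_j-1}-(T^\ast)^{n_j}T^{n_j}$ sits at a fixed depth below the top of each block, so multiplying by the bottom projection $I-TT^\ast$ kills every block except the one of size exactly $n_j$, producing the rank-one projection $E^{(n_j)}_{11}$ directly; the matrix units then follow by applying powers of $T$. Your computation checks out (the difference of the two diagonal projections is indeed $E^{(n_i)}_{n_i-n_j+1,\,n_i-n_j+1}$ on each block of size $n_i\ge n_j$ and zero otherwise), and it buys a shorter, functional-calculus-free argument that also handles general $X$ without the reduction step. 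The paper's version, by contrast, produces the central projections $Q_k$ as a byproduct, which is a mildly stronger structural statement but is not needed elsewhere.
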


\begin{proof}
(1) By construction, $\rC^*(\bigoplus_{n \in X} S_n)$ is RFD. Furthermore, $\rC^*(S\oplus S^\ast)$ is a C*-cover for $\mathrm{A}(\mathbb{D})$ and so, the remainder of (1) will automatically follow from (3).

\bigskip

(2) It suffices to show that (2) holds in the case $X=\bbN$. Indeed, if this were true, the coordinate projection $\prod_{n\in \bbN} M_n\to \prod_{n\in X}M_n$ would restrict to a morphism of C*-covers \[\rC^*\left(\bigoplus_{n\in \bbN} M_n\right)\longrightarrow \rC^*\left(\bigoplus_{n\in X}M_n\right)\] that maps $\bigoplus_{n\in \bbN} M_n$ surjectively onto $\bigoplus_{n\in X}M_n$.

Accordingly, to show that $\bigoplus_{n\in \bbN} M_n\subseteq \rC^*(\bigoplus_{n\in \bbN} S_n)$, it suffices to show that $\rC^*(\bigoplus_{n\in \bbN} S_n)$ contains the central projections
\[
Q_k =
0\oplus \cdots \oplus 0 \oplus I_k\oplus 0\oplus \cdots,
\]
where the identity matrix appears in the $k$-th summand. Indeed, if this were the case, then the C*-algebra $\rC^*(\bigoplus_{n\in\bbN}S_n)$ would contain
\[
Q_n\left(\bigoplus_{n\in \bbN} S_n\right) =
0\oplus \cdots \oplus 0 \oplus S_k\oplus 0\cdots,\]
and, since $\rC^*(S_n) = M_n$, we may conclude that $\bigoplus_{n\in \bbN} M_n$ is contained in $\rC^*(\bigoplus_{n\in \bbN} S_n)$.

To this end, we may express $S_n$ using the standard matrix units as
\[
S_n =
E_{21}+E_{32}+\cdots+E_{n,(n-1)}.
\]
Upon considering how this expression acts with respect to standard basis vectors in $\bbC^n$, one can directly verify that $S_n^{\ast k}S_n^k$ is the diagonal matrix
\[
S_n^{\ast k}S_n^k =
\text{diag}(1,\ldots,1,0,\ldots,0)
\]
with $\max\{n-k,0\}$ non-zero entries for any $k,n\in\bbN$. When $k< n$, note that
\begin{align*}
S_n^{\ast k}S_n^k &=
\text{diag}(1,\ldots,1,0,0,\ldots,0), \\
S_n(S_n^{\ast k}S_n^k)S_n^\ast &=
\text{diag}(0,1,\ldots,1,0,\ldots,0), \\
&\,\vdots \\
S_n^{k-1}(S_n^{\ast k}S_n^k)S_n^{\ast (k-1)} &=
\text{diag}(0,0,\ldots,0,1,\ldots,1)
\end{align*}
with $n-k$ non-zero entries for each matrix. When $k<n$, we then have that the matrix
\[
\sum_{j=0}^{k-1}S_n^j (S_n^{\ast k}S_n^k)S_n^{\ast j}
\]
is diagonal with positive integer entries. Otherwise, if $k \ge n$, then this matrix is just the zero matrix, because $S_n^n=0$.

Thus far, we have established that for the $\ast$-polynomial
\[
f_k(z) =
\sum_{j=0}^{k-1} z^j(z^{\ast k}z^k)z^j,
\]
and the operator $\bigoplus_{n\in \bbN} S_n$, we have that
\[
f_k\left(\bigoplus_{n\in \bbN} S_n\right) =
0\oplus \cdots \oplus 0 \oplus f_k(S_{k+1})\oplus f_k(S_{k+2})\oplus\cdots,
\]
where there are $n$ zero summands and, for $n>k$, the matrix $f_k(S_n)$ is diagonal with diagonal entries at least $1$. Now, if $g:[0,\infty)\to [0,\infty)$ is a continuous function with the properties that $g(0)=0$ and $g(t)=1$ for $t\ge 1$, then
\[
g\left(f_k\left(\bigoplus_{n\in \bbN} S_n\right)\right) =
0\oplus \cdots \oplus 0 \oplus I_{k+1}\oplus I_{k+2}\oplus \cdots 
\]
lies in $\rC^*(\bigoplus_{n\in \bbN} S_n)$ for each $k\in\bbN$. Therefore, upon subtracting, we have $\rC^*(\bigoplus_{n\in \bbN}S_n)$ also contains
\[
Q_k =
0\oplus \cdots 0 \oplus I_k\oplus 0 \oplus \cdots
\]
for each $k\in\bbN$, as desired.

\bigskip

(3) Throughout, we set \[T=\bigoplus_{n\in X} S_n +\bigoplus_{n\in X} M_n,\]which is the coset of $\bigoplus_{n\in X}S_n$ inside $\rC^*(\bigoplus_{n\in X}S_n)/(\bigoplus_{n\in X} M_n)$. We claim that $\rC^*(S\oplus S^\ast)\simeq \rC^*(T)$ by a $\ast$-isomorphism that maps $S\oplus S^\ast$ to $T$. This will be accomplished by proving that $T$ is a proper partial isometry satisfying the relations acquired in Proposition \ref{prop:S+S*}.

We first verify that $T$ is a proper partial isometry. That $T$ is a partial isometry follows from $S_n$ being a partial isometry for each $n\in\bbN$. Furthermore, $S_n^\ast S_n=I_n-E_{nn}$ for each $n\ge 2$, so that $T^\ast T$ is not the identity operator. Similarly, $S_nS_n^\ast=I_n-E_{11}$ for all $n\ge 2$, which implies that $TT^\ast\ne I$ and that $T$ must be proper.

It remains to show that
\[
(I-T^\ast T)T^k(I-T^\ast T) =0
\]
for every $k\ge 0$. For each $n\in\bbN$ such that $n\neq k$, the truncated shifts $S_n$ satisfies
\begin{align*}
(I-S_n^\ast S_n)S_n^k(I-S_nS_n^\ast) &=
E_{nn}S_n^kE_{11} =
\begin{cases}
    E_{nn}E_{k1}, & k\le n \\
    0, & k\ge n.
\end{cases}
\end{align*}
Thus, upon passing to the quotient by $\bigoplus_{n \in X}M_n$, we have $(I-T T^\ast)T^k(I-T^\ast T)=0$. Therefore, by Proposition \ref{prop:S+S*}, there is a $\ast$-isomorphism $\rC^*(S\oplus S^\ast)\to \rC^*(T)$ mapping $S\oplus S^\ast$ to $T$, which then induces an equivalence of C*-covers for $\mathrm{A}(\mathbb{D})$.

Finally, note that the quotient mapping \[
\rC^*\left(\bigoplus_{n\in X}S_n\right)\longrightarrow\frac{\rC^*\left(\bigoplus_{n\in X} S_n\right)}{\bigoplus_{n\in X}M_n}
\]is a surjective $*$-homomorphism that evidently maps $\bigoplus_{n\in X} S_n$ to $T$. Thus, we have that $\rC^*\left(\bigoplus_{n\in X}S_n\right)$ must in fact be a $\rC^*$-cover for $\mathrm{A}(\bbD)$ and that $\bigoplus_{n\in X}M_n$ is a boundary ideal.
\end{proof}

We will now need to compare the RFD C*-covers
\[
\rC^*\left(\bigoplus_{n\in \bbN} S_n\right) \quad\text{and}\quad
\rC^*\left(\bigoplus_{n\in Y} S_n\right),
\]
where $X\subset \bbN$ is some infinite subset. In particular, we note that these determine distinct C*-covers for $\mathrm{A}(\bbD)$.

\begin{proposition}\label{prop:infinite_compression_morphism}
Let $X\subseteq \bbN$ be an infinite subset. Then the projection map $\prod_{n\in \bbN} M_n \to \prod_{n\in X} M_n$ restricts to a morphism
\[
\pi:\rC^*\left(\bigoplus_{n\in \bbN} S_n\right) \to
\rC^*\left(\bigoplus_{n\in X} S_n\right)
\]
of C*-covers of $\mathrm{A}(\mathbb{D})$, with corresponding boundary ideal given by $\bigoplus_{n\in X^c} M_n$.
\end{proposition}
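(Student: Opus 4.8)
The plan is to realize $\pi$ as the restriction of the coordinate projection, and then pin down its kernel by comparing it against the two quotient maps already furnished by Proposition \ref{prop:infinite_compression}. First I would observe that the coordinate projection $P\colon \prod_{n\in\bbN}M_n\to\prod_{n\in X}M_n$, $(a_n)_{n\in\bbN}\mapsto (a_n)_{n\in X}$, is a unital $*$-homomorphism carrying the generator $\bigoplus_{n\in\bbN}S_n$ to $\bigoplus_{n\in X}S_n$. Consequently $P$ restricts to a $*$-homomorphism on $\rC^*(\bigoplus_{n\in\bbN}S_n)$ whose image is the C*-algebra generated by $\bigoplus_{n\in X}S_n$ and the unit, namely $\rC^*(\bigoplus_{n\in X}S_n)$; this restriction $\pi$ is therefore a surjective, generator-preserving $*$-homomorphism. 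Since both algebras are C*-covers of $\mathrm{A}(\bbD)$ via $z\mapsto \bigoplus S_n$, the compatibility $\pi j=\iota$ holds on the generator $z$ and hence on all of $\mathrm{A}(\bbD)$, so $\pi$ is the (unique) morphism of C*-covers.

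It then remains to show the boundary ideal $\ker\pi$ equals $\bigoplus_{n\in X^c}M_n$. The inclusion $\bigoplus_{n\in X^c}M_n\subseteq\ker\pi$ is routine: by Proposition \ref{prop:infinite_compression}(2) applied with $X=\bbN$ we have $\bigoplus_{n\in\bbN}M_n\subseteq \rC^*(\bigoplus_{n\in\bbN}S_n)$, so the subset $\bigoplus_{n\in X^c}M_n$ lies in the domain of $\pi$, and these elements are visibly annihilated by $P$.

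The hard part will be the reverse inclusion $\ker\pi\subseteq\bigoplus_{n\in X^c}M_n$: an element $a$ of the kernel satisfies $a_n=0$ for $n\in X$, but a priori one only knows that $a$ is supported on $X^c$, not that $\|a_n\|\to 0$, i.e.\ that $a$ lands in the $c_0$-type direct sum rather than in the full product. To force this decay, I would invoke the two generator-preserving quotient maps from Proposition \ref{prop:infinite_compression}(3): the map $q\colon \rC^*(\bigoplus_{n\in\bbN}S_n)\to \rC^*(S\oplus S^\ast)$ (the case $X=\bbN$) and $q_X\colon \rC^*(\bigoplus_{n\in X}S_n)\to\rC^*(S\oplus S^\ast)$. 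Both $q$ and $q_X\circ\pi$ are $*$-homomorphisms out of $\rC^*(\bigoplus_{n\in\bbN}S_n)$ sending the generator $\bigoplus_{n\in\bbN}S_n$ to $S\oplus S^\ast$; since $\rC^*(S\oplus S^\ast)$ is singly generated, they must coincide, giving $q=q_X\circ\pi$. Then for $a\in\ker\pi$ we get $q(a)=q_X(\pi(a))=0$, whence $a\in\ker q=\bigoplus_{n\in\bbN}M_n$. Intersecting with the support condition $a_n=0$ for $n\in X$ yields $a\in\bigoplus_{n\in X^c}M_n$, completing the argument. The only subtlety to check carefully is the equality of the two maps on the generator, which rests squarely on the generator-preserving nature of the isomorphisms produced in Proposition \ref{prop:infinite_compression}(3).
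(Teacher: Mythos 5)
Your proposal is correct and follows essentially the same route as the paper: both establish $\ker\pi\subseteq\bigoplus_{n\in\bbN}M_n$ by composing $\pi$ with the quotient onto $\rC^*(S\oplus S^\ast)$ and invoking uniqueness of C*-cover morphisms together with Proposition \ref{prop:infinite_compression}(3), and both then cut down to $\bigoplus_{n\in X^c}M_n$ using that the coordinate projection is injective on the summands indexed by $X$. No gaps.
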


\begin{proof}
Since $\pi$ is given by coordinate projection, it is immediate that $\pi$ is a morphism of $\rC^*$-covers, and that $\ker\pi\supseteq \bigoplus_{n\in X^c} M_n$.

To establish the reverse inclusion, note that Proposition \ref{prop:infinite_compression} implies that there is a morphism of $\rC^*$-covers
\[
\sigma:
\rC^*\left(\bigoplus_{n\in X} S_n\right)\to 
\rC^*(S\oplus S^\ast).
\] 
Then $\sigma\pi:\rC^*(\bigoplus_{n\in \bbN} S_n)\to \rC^*(S\oplus S^\ast)$ is a C*-cover morphism. As morphisms of $\rC^*$-covers are unique, by Proposition \ref{prop:infinite_compression} (3), we must have $\ker(\sigma\pi)=\bigoplus_{n\in \bbN} M_n$. Since $\ker \pi\subseteq \ker (\sigma\pi)$, we have established that
\[
\bigoplus_{n\in X^c} M_n\subseteq
\ker\pi\subseteq \bigoplus_{n\in \bbN} M_n.
\]
Now, since $\pi$ is the coordinate projection and $\rC^*(\bigoplus_{n\in X} S_n)$ contains $\bigoplus_{n\in X} M_n$ by Proposition \ref{prop:infinite_compression} (2), it must be that $\pi$ acts faithfully on $\bigoplus_{n\in X} M_n$. Therefore, we find that $\ker\pi$ is contained in $\bigoplus_{n\in X^c}M_n$ as desired.
\end{proof}


We have now performed all the necessary computations to show that the RFD C*-covers of $\mathrm{A}(\mathbb{D})$ are not closed under finite meets. For this, recall that to describe the meet of two C*-covers $[\B,\iota]$ and $[\C,\eta]$, it is sufficient to consider any C*-cover $[\D, \rho]$ such that $[\D,\rho]\ge [\B,\iota],[\C,\eta]$. Indeed, this follows from Remark \ref{r:meet} because if $\pi:(\D,\rho)\to (\B,\iota)$ and $\sigma:(\D,\rho)\to (\C,\eta)$ are the unique C*-cover morphisms, then the meet is given by
\[
[\B,\iota]\wedge[\C,\eta] = \left[\frac{\D}{(\overline{\ker\pi+\ker\sigma})},q\rho\right],
\]
where $q:\D\to \D/(\overline{\ker\pi+\ker\sigma})$ is the quotient map. This presents us with the main result of this section.

\begin{theorem}\label{t:disc-lattice}
    The following statements hold.\begin{enumerate}
        \item[(1)] There exist two RFD $\rC^*$-covers $[\B, \iota], [\C, j]$ for $\mathrm{A}(\bbD)$ such that the meet $[\B, \iota]\wedge[\C, j]$ is not an RFD C*-cover for $\mathrm{A}(\bbD)$.
        \item[(2)] There exists a (totally ordered) chain of distinct RFD $\rC^*$-covers $[\B_k, \iota_k]$ for $\mathrm{A}(\bbD)$ whose meet is not an RFD C*-cover.
    \end{enumerate}In particular, the space of residually finite-dimensional C*-covers for $\mathrm{A}(\bbD)$ is not a lattice.
\end{theorem}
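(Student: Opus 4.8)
The plan is to realize the non-RFD cover $\rC^*(S\oplus S^\ast)$ as a meet of RFD covers of the form $\rC^*(\bigoplus_{n\in X}S_n)$, exploiting the fact (Proposition \ref{prop:infinite_compression}) that each of the latter is an RFD cover of $\mathrm{A}(\bbD)$ whose quotient by the ideal $\bigoplus_{n\in X}M_n$ recovers $\rC^*(S\oplus S^\ast)$. The common upper bound $\D:=\rC^*(\bigoplus_{n\in\bbN}S_n)$ will serve as the ambient cover against which all meets are computed.

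For part (1), I would take $X$ and $Y$ to be the even and odd positive integers and set $\B=\rC^*(\bigoplus_{n\in X}S_n)$, $\C=\rC^*(\bigoplus_{n\in Y}S_n)$; both are RFD covers by Proposition \ref{prop:infinite_compression}(1). Since $X$ and $Y$ are infinite, Proposition \ref{prop:infinite_compression_morphism} supplies morphisms $\pi:\D\to\B$ and $\sigma:\D\to\C$ with boundary ideals $\ker\pi=\bigoplus_{n\in X^c}M_n$ and $\ker\sigma=\bigoplus_{n\in Y^c}M_n$, so in particular $\D$ dominates both covers. The meet formula recorded just before Theorem \ref{t:disc-lattice} (justified by Remark \ref{r:meet}) then gives
\[
[\B,\iota]\wedge[\C,\eta]=\left[\D/\overline{\ker\pi+\ker\sigma},\,q\rho\right].
\]
The key computation is that $\ker\pi+\ker\sigma=\bigoplus_{n\in X^c}M_n+\bigoplus_{n\in Y^c}M_n=\bigoplus_{n\in X^c\cup Y^c}M_n=\bigoplus_{n\in\bbN}M_n$, using $X\cap Y=\varnothing$; this ideal is already closed, being a $c_0$-sum over all of $\bbN$. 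Proposition \ref{prop:infinite_compression}(3) (with $X=\bbN$) then identifies the meet with $\rC^*(S\oplus S^\ast)$, which is not RFD by Remark \ref{rem:S+S*_not_RFD}.

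For part (2), I would instead use the strictly decreasing chain $X_k=\{n\in\bbN:n\ge k\}$ and set $\B_k=\rC^*(\bigoplus_{n\ge k}S_n)$. Each $\B_k$ is an RFD cover by Proposition \ref{prop:infinite_compression}(1), and the coordinate projections make $\{[\B_k]\}$ a totally ordered chain with $[\B_1]\succeq[\B_2]\succeq\cdots$; the covers are distinct because $X_k\setminus X_{k+1}=\{k\}$ forces a nonzero boundary ideal $M_k$. Computing the meet of this downward directed family against the dominator $\D$ (Remark \ref{r:meet} and Proposition \ref{p:meet}) yields $\bigwedge_k[\B_k]=[\D/\overline{\sum_k\ker(\D\to\B_k)}]$ with $\ker(\D\to\B_k)=\bigoplus_{n<k}M_n$. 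Here the essential subtlety is that $\sum_k\bigoplus_{n<k}M_n$ is merely the algebra of finitely supported sequences, and one must pass to its closure, which is the full $c_0$-sum $\bigoplus_{n\in\bbN}M_n$; with that in hand, Proposition \ref{prop:infinite_compression}(3) again identifies the meet with the non-RFD algebra $\rC^*(S\oplus S^\ast)$.

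The final assertion then follows at once: in both constructions we exhibit RFD covers whose meet, computed in $\cstarlattice(\mathrm{A}(\bbD))$, fails to be RFD, so the RFD covers are not closed under the meet inherited from $\cstarlattice(\mathrm{A}(\bbD))$ and hence do not form a lattice therein. I expect the only genuinely delicate points to be bookkeeping: confirming that $\D$ legitimately dominates the chosen covers so that the meet formula applies, and — especially in part (2) — being careful that the relevant ideal is the \emph{closure} of $\sum_k\bigoplus_{n<k}M_n$ rather than the strictly smaller algebraic sum. All of the hard analytic work has already been carried out in Propositions \ref{prop:S+S*}, \ref{prop:infinite_compression}, and \ref{prop:infinite_compression_morphism}.
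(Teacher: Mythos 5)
Your proposal is correct and follows essentially the same route as the paper: part (1) uses the even/odd splitting with the boundary ideals from Proposition \ref{prop:infinite_compression_morphism} summing to $\bigoplus_{n\in\bbN}M_n$, and part (2) uses the tail chain $\rC^*(\bigoplus_{n\ge k}S_n)$, with both meets identified as $\rC^*(S\oplus S^\ast)$ via Proposition \ref{prop:infinite_compression}(3). The only cosmetic difference is that in part (2) you compute the meet as a quotient of the dominating cover by the closed sum of kernels (Remark \ref{r:meet}) where the paper passes through the direct limit of Proposition \ref{p:meet}; these yield the same cover.
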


\begin{proof}
    (1) Let $E\subseteq \bbN$ be the set of even natural numbers, so that the complement $E^c$ is the set of odd naturals. By Proposition \ref{prop:infinite_compression} (1), both $\rC^*(\bigoplus_{n\in E}S_n)$ and $\rC^*(\bigoplus_{n\in E^c} S_n)$ produce RFD C*-covers of $\mathrm{A}(\mathbb{D})$. We claim that the meet of these $\rC^*$-covers for $\mathrm{A}(\mathbb{D})$ is $\rC^*(S\oplus S^*)$.

    Indeed, note that the coordinate projections are morphisms of $\rC^*$-covers
    \[\begin{tikzcd}
       & \rC^*\left(\bigoplus_{n\in \bbN}M_n\right) \arrow[ld,swap,"\pi_1"] \arrow[rd,"\pi_2"] & \\
        \rC^*\left(\bigoplus_{n\in E^c}M_n\right) & & \rC^*\left(\bigoplus_{n\in E}M_n\right)
    \end{tikzcd}\]
    and so the meet of these $\rC^*$-covers is
    \[
    \frac{\rC^*\left(\bigoplus_{n\in \bbN}M_n\right)}{(\overline{\ker\pi_1+\ker\pi_2})}.
    \]
    By Proposition \ref{prop:infinite_compression_morphism}, we have
    \[
    \ker \pi_1 = \bigoplus_{n\in E} M_n \quad\text{and}\quad
    \ker\pi_2 = 
    \bigoplus_{n\in E^c} M_n.
    \]
    In particular, we must have that $\ker\pi_1+\ker\pi_2$ is equal to $\bigoplus_{n\in \bbN} M_n$, which is also closed. By Proposition \ref{prop:infinite_compression} (3), we then have an equivalence of $\rC^*$-covers
    \[
    \frac{\rC^*\left(\bigoplus_{n\in \bbN}M_n\right)}{(\overline{\ker\pi_1+\ker\pi_2})} =
    \frac{\rC^*\left(\bigoplus_{n\in \bbN}M_n\right)}{\bigoplus_{n \in \bbN} M_n} \simeq
    \rC^*(S\oplus S^\ast),
    \]
    and $\rC^*(S\oplus S^*)$ not RFD (Remark \ref{rem:S+S*_not_RFD}). In particular, the space of RFD C*-covers for $\mathrm{A}(\bbD)$ do not form a lattice.

    (2) Consider the totally ordered chain of RFD C*-covers
    \[
    \rC^*\left(\bigoplus_{n\ge 1}S_n\right)\to 
    \rC^*\left(\bigoplus_{n\ge 2}S_n\right) \to
    \rC^*\left(\bigoplus_{n\ge 3}S_n\right) \to\cdots
    \]
    of $\mathrm{A}(\mathbb{D})$, where the maps are the natural projections. By Proposition \ref{p:meet}, the meet of this chain is the direct limit
    \[
    \varinjlim_k \rC^*\left(\bigoplus_{n\ge k} S_n\right)
    \]
    taken along the sequence of quotient mappings. Up to equivalence of $\rC^*$-covers, the directed system of $\rC^*$-covers agrees with
    \[
    \rC^*\left(\bigoplus_{n\ge 1}S_n\right)\to
    \frac{\rC^*\left(\bigoplus_{n\ge 1}S_n\right)}{M_1} 
    \to\frac{\rC^*\left(\bigoplus_{n\ge 1}S_n\right)}{M_1\oplus M_2}
    \to\frac{\rC^*\left(\bigoplus_{n\ge 1}S_n\right)}{M_1\oplus M_2\oplus M_3}\to \cdots.
    \]
    It is then straightforward to check that the direct limit is
    \[
    \varinjlim_k \rC^*\left(\bigoplus_{n\ge k} S_n\right) =
    \frac{\rC^*\left(\bigoplus_{n\ge 1}S_n\right)}{\bigoplus_{n\ge 1} M_n},
    \]
    which, by Proposition \ref{prop:infinite_compression} (3), is equivalent to $C(S\oplus S^\ast)$ as a $\rC^*$-cover of $\mathrm{A}(\bbD)$.
\end{proof}



\section{A couniversal residually finite-dimensional C*-cover}\label{s:rfd-env}

In this section, we answer a question raised in \cite{HumRam}. Given a residually finite-dimensional operator algebra $\A$, is the meet of all RFD $\rC^*$-covers for $\A$ still an RFD $\rC^*$-cover? A positive answer would produce a couniversal RFD $\rC^*$-cover for $\A$, effectively resulting in an RFD replacement for the $\rC^*$-envelope for $\A$.

There are many classes of operator algebras where a positive answer to the problem can be established. For example, the meet of all RFD $\rC^*$-covers of a so-called completely isometrically subhomogeneous operator algebra is the $\rC^*$-envelope, and must necessarily be RFD \cite[Proposition 4.1]{AHMR}. A multitude of other constraints guarantee that the $\rC^*$-envelope is RFD \cite[Section 6]{CR}, exhibiting a positive, yet trivial, answer to the problem at hand. Nevertheless, this is by no means exhaustive, as we now elaborate. The remainder of this section will highlight several examples of RFD operator algebras that do not possess a minimal RFD $\rC^*$-cover. Our first example takes the form of a tensor algebra of a non-finite $\rC^*$-correspondence.

\begin{theorem}\label{t:tri-compact}
Let $\A\subset B(\ell^2(\bbN))$ denote the subalgebra of upper triangular compact operators. Then the following statements hold.\begin{enumerate}
    \item[(1)] The meet of all residually finite-dimensional C*-covers for $\A$ is $K(\ell^2(\bbN))$.
    \item[(2)] The space of residually finite-dimensional C*-covers for $\A$ does not have a minimal element.
\end{enumerate}
\end{theorem}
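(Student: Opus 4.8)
The plan is to prove (1) by exhibiting an explicit downward-directed chain of RFD C*-covers whose meet is $K(\ell^2(\bbN))$, and then to deduce (2) formally. The first thing I would record is that $K(\ell^2(\bbN))$ is the C*-envelope of $\A$: the inclusion $\A\hookrightarrow K(\ell^2(\bbN))$ is a C*-cover because the matrix units $E_{ij}$ with $i\le j$ are upper triangular finite-rank operators lying in $\A$ and generating $K(\ell^2(\bbN))$, and since $K(\ell^2(\bbN))$ is simple its only boundary ideal is $0$. Hence $[K(\ell^2(\bbN)),\id]$ is the minimum element of $\cstarlattice(\A)$.

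For each $N\ge 1$ let $\pi_N:\A\to M_N$ be the compression $T\mapsto P_N T P_N$ onto $\H_N=\Span\{e_1,\dots,e_N\}$. As $\H_N$ is invariant for every upper triangular operator, $\pi_N$ is a genuine completely contractive representation with $\pi_N(\A)=T_N$, so $\rC^*(\pi_N(\A))=M_N$. For $k\ge 1$ put $\Phi_k=\bigoplus_{N\ge k}\pi_N$; since $\|P_NTP_N\|\to\|T\|$ as $N\to\infty$ for compact $T$ (and matricially), each $\Phi_k$ is completely isometric and $\rC^*(\Phi_k(\A))\subseteq\prod_{N\ge k}M_N$ is an RFD C*-cover. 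Deleting the $k$-th coordinate is a morphism $\rC^*(\Phi_k(\A))\to\rC^*(\Phi_{k+1}(\A))$, so $[\rC^*(\Phi_1(\A))]\succeq[\rC^*(\Phi_2(\A))]\succeq\cdots$ is downward directed and by Proposition \ref{p:meet} its meet is the inductive limit $\rC^*(\Phi_1(\A))/\J$, where $\J$ is the closure of the union of the kernels of the block-deletion maps.

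The technical heart, which I expect to be the main obstacle, is to show $\bigoplus_N M_N\subseteq\rC^*(\Phi_1(\A))$. The difficulty is that the blocks are nested, $\pi_N(T)=P_N\pi_{N+1}(T)P_N$, so operators supported on low indices act identically across all large blocks and cannot by themselves isolate a single block. The key is that block $N$ is detected by the vanishing of $E_{N,N+1}$ there: one has $\pi_M(E_{N,N+1})=0$ for $M\le N$ and $\pi_M(E_{N,N+1})=E_{N,N+1}$ for $M\ge N+1$, so the element $e_N:=\Phi_1(E_{NN})-\Phi_1(E_{N,N+1})\Phi_1(E_{N,N+1})^\ast$ equals $E_{NN}$ in block $N$ and $0$ in every other block. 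Multiplying $e_N$ by $\Phi_1(E_{iN})$ and its adjoint then recovers every matrix unit of $M_N$ isolated in block $N$, yielding $\bigoplus_N M_N\subseteq\rC^*(\Phi_1(\A))$. This is the nested analogue of the block-isolation step in Proposition \ref{prop:infinite_compression}(2). With it in hand, I would realize $\rC^*(\Phi_1(\A))$ inside the C*-algebra of norm-convergent sequences $(a_N)$ with $a_N\in M_N=P_N B(\ell^2(\bbN))P_N$; the limit $\ast$-homomorphism onto $K(\ell^2(\bbN))$ sends $\Phi_1(T)\mapsto T$, so its restriction is the envelope cover morphism and its kernel is $\rC^*(\Phi_1(\A))\cap\bigoplus_N M_N=\bigoplus_N M_N$. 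Since $\ker(\rC^*(\Phi_1(\A))\to\rC^*(\Phi_k(\A)))=M_1\oplus\cdots\oplus M_{k-1}$, whose union is dense in $\bigoplus_N M_N$, we get $\J=\bigoplus_N M_N$ and hence the meet of the chain is $\rC^*(\Phi_1(\A))/\bigoplus_N M_N\cong K(\ell^2(\bbN))$, which is not RFD. As $K(\ell^2(\bbN))$ is the minimum of $\cstarlattice(\A)$ and the chain is a subfamily of all RFD covers, a greatest-lower-bound squeeze gives that the meet of all RFD C*-covers equals $K(\ell^2(\bbN))$, proving (1).

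Finally, (2) is a formal consequence. If a least (couniversal) RFD C*-cover $[\R]$ existed, it would be a lower bound for the family of all RFD covers, so by (1) we would have $[\R]\preceq K(\ell^2(\bbN))$; since $[\R]$ is itself RFD, membership in that family forces $K(\ell^2(\bbN))\preceq[\R]$ as well. Thus $[\R]=K(\ell^2(\bbN))$, contradicting that $K(\ell^2(\bbN))$ is not RFD. Hence no minimal RFD C*-cover exists.
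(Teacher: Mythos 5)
Your proof is correct and follows essentially the same route as the paper: the same downward chain of finite-dimensional compressions $\bigoplus_{N\ge k}\pi_N$, the same identification of its meet with $K(\ell^2(\bbN))$ via Proposition \ref{p:meet}/Remark \ref{r:meet}, and the same squeeze against the C*-envelope, with (2) deduced from (1) exactly as in the paper. Your explicit block-isolation element $\Phi_1(E_{NN})-\Phi_1(E_{N,N+1})\Phi_1(E_{N,N+1})^\ast$ correctly supplies the detail that the paper leaves as ``straightforward.''
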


\begin{proof}
We first remark that (2) follows from (1) because the simplicity of $K(\ell^2(\bbN))$ forces it to be the $\rC^*$-envelope for $\A$. Furthermore, note that $\A$ is completely isometrically isomorphic to the tensor algebra of the $\rC^*$-correspondence $(X,\C) = (c_0, c_0)$ where $\C$ is the diagonal and $X$ is the superdiagonal in $K(\ell^2(\mathbb N))$.

Let $V_n$ be the natural isometry $V_n: \spn\{e_1,\dots, e_n\} \rightarrow \ell^2(\mathbb N)$. Since $\A$ consists of upper triangular operators, the compression map $\rho_n:\A\rightarrow M_n$ defined by $\rho_n(A)= V_n^* AV_n$ is a representation of $\A$. It is then straightforward to see that
\begin{align*}
\bigwedge_{m\geq 1} \left[ C^*\left(\left(\bigoplus_{n\geq m} \rho_n\right)(\A)\right), \bigoplus_{n\geq m} \rho_n\right] 
& \ \ = \ \  \left[ C^*\left(\left(\varinjlim_{m\rightarrow \infty} \bigoplus_{n\geq m} \rho_n\right)(\A)\right), \varinjlim_{m\rightarrow \infty} \bigoplus_{n\geq m} \rho_n\right]
\\ \\ &\ \ = \ \ \left[ K(\ell^2(\mathbb N), \iota_{\min} \right]\,.
\end{align*}
For each $k,m\in\bbN$ such that $m\geq k$, the above equivalence embeds the algebra of upper triangular matrices, $T_k\subset M_k$, with identity and zero representations: 
\[
C^*\left(\left(\bigoplus_{n\geq m} \rho_n\right)(V_k T_k V_k^*) \right) \simeq M_k\,.
\] Since the compact operators $K(\ell^2(\bbN))$ are not RFD, we obtain the desired conclusion.
\end{proof}

Next, we show that the RFD C*-covers for the so-called standard embedding TUHF algebra do not form a lattice. For this, recall that the {\bf standard embedding} of $M_n$ into $M_{kn}$ is 
\[
A \ \mapsto \ A \oplus A \oplus \cdots \oplus A = I_k \otimes A
\]
and the {\bf refinement embedding} is 
\[
A \ \mapsto \ A\otimes I_k\,.
\]
When considering the algebra $T_2$, the standard and refinement embeddings are
\[
\left[\begin{matrix} a&b\\ &c \end{matrix}\right] \mapsto \left[\begin{matrix} a&b\\ &c  \\ && a& b \\ && &c \end{matrix}\right] \quad \textrm{and} \quad 
\left[\begin{matrix} a&b\\ &c \end{matrix}\right] \mapsto \left[\begin{matrix} a&&b\\ &a&&b \\ &&c \\ &&&c \end{matrix}\right],
\]
respectively. The standard embedding $2^\infty$ TUHF algebra, denoted by $\T_{s,2^\infty}$, is the direct limit of $T_{2^n}, n\geq 1,$ with respect to standard embeddings. Similarly, the refinement embedding $2^\infty$ TUHF algebra, denoted $\T_{r,2^\infty}$, is the direct limit of $T_{2^n}, n\geq 1,$ along the refinement embeddings.

These two operator algebras are not completely isometrically isomorphic, and there are many methods to identify why this is the case. For one, the standard embedding $2^\infty$ TUHF algebra is completely isometrically isomorphic to a tensor algebra of a C*-correspondence, while the refinement is not \cite{KatRamLimit}. Furthermore, one can observe that $\T_{s,2^\infty}$ is residually finite-dimensional, yet $\T_{r,2^\infty}$ is not. The former is discussed in further detail below, while for the latter, one can see that every off-diagonal matrix unit in $T_{2^n}$ has a form of infinite divisibility:
\[
T_2 \ni e_{1,2}^{(1)}  = e_{1,2}^{(2)}e_{2,3}^{(2)} + e_{2,3}^{(2)}e_{3,4}^{(2)} \in T_4\,.
\]
Among other things, this implies that every finite-dimensional representation of $\T_{r,2^\infty}$ is zero on the off-diagonal. Consequently, this can only define a representation of the diagonal $\T_{r,2^\infty}\cap \T_{r,2^\infty}^*$ of $\T_{r,2^\infty}$, which is the space of continuous functions on the Cantor set.

\begin{theorem}
Let $\T_{s, 2^\infty}$ denote the standard embedding $2^\infty$ TUHF algebra. Then the following statements hold. \begin{enumerate}
    \item [(1)] The meet of all residually finite-dimensional C*-covers for $\T_{s, 2^\infty}$ is $M_{2^\infty}$.
    \item [(2)] The space of residually finite-dimensional C*-covers for $\T_{s,2^\infty}$ does not have a minimal element.
\end{enumerate}
\end{theorem}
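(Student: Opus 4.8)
The plan is to derive part (2) from part (1) and to prove part (1) in direct analogy with Theorem \ref{t:tri-compact}: I will exhibit an explicit totally-ordered chain of RFD C*-covers whose meet is the C*-envelope $M_{2^\infty}$, and then note that $M_{2^\infty}$, being simple and infinite-dimensional, has no nonzero finite-dimensional representations and hence is not RFD. For the reduction, the RFD covers sit inside the complete lattice $\cstarlattice(\T_{s,2^\infty})$ and form a join-semilattice (Proposition \ref{p:rfd-join}); a minimal RFD cover, were one to exist, would have to equal the infimum $\mathfrak m$ of all RFD covers, which part (1) identifies with the non-RFD algebra $M_{2^\infty}$. Thus no minimal element can exist.

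For part (1) I would first record that $M_{2^\infty}$ is the C*-envelope of $\T_{s,2^\infty}$: the completely isometric inclusions $T_{2^n}\subset M_{2^n}$ intertwine the standard embeddings and so induce a completely isometric, generating embedding $\T_{s,2^\infty}\hookrightarrow M_{2^\infty}$, and simplicity of $M_{2^\infty}$ forces its only boundary ideal to be zero; hence $M_{2^\infty}\preceq R$ for every cover $R$, and in particular $M_{2^\infty}\preceq\mathfrak m$. Next I would construct finite-dimensional compressions. Representing $M_{2^\infty}$ on $H=\bigotimes_{k\ge 1}(\bbC^2,e_1)$ with $T_{2^n}$ acting on the first $n$ tensor legs (deepest leg most significant), the subspace $H_n\cong\bbC^{2^n}$ obtained by freezing all legs past the $n$-th at $e_1$ is invariant for the upper-triangular algebra, so the compression $\rho_n\colon\T_{s,2^\infty}\to M_{2^n}$ is a completely contractive homomorphism restricting to the identity on the copy of $T_{2^n}$. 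Since the standard embedding is completely isometric and $\rho_n$ agrees with it on the copy of $T_{2^m}$ whenever $n\ge m$, each tail $\bigoplus_{n\ge m}\rho_n$ is completely isometric with range in a product of matrix algebras; thus $B_m:=\rC^*\big((\bigoplus_{n\ge m}\rho_n)(\T_{s,2^\infty})\big)$ is an RFD C*-cover, and the coordinate-dropping morphisms $B_m\to B_{m+1}$ present $\{B_m\}$ as a downward-directed chain whose meet $\mathfrak c$ is, by Proposition \ref{p:meet}, the inductive limit $\varinjlim_m B_m$.

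The crux is to identify $\mathfrak c$ with $M_{2^\infty}$. Writing an element of $B_1\subset\prod_n M_{2^n}$ as a sequence $(x_n)_n$, the inductive-limit norm equals $\limsup_n\|x_n\|$, so the canonical map $B_1\to\mathfrak c$ has kernel exactly $\{(x_n)_n : \|x_n\|\to 0\}$. On the other hand, each generator $\bigoplus_n\rho_n(a)$ is, past a finite stage, an exactly coherent sequence of standard embeddings, so $(x_n)_n\mapsto \lim_n \iota_n(x_n)$ --- where $\iota_n\colon M_{2^n}\hookrightarrow M_{2^\infty}$ is the canonical isometric embedding --- defines a surjective $*$-homomorphism $\Theta\colon B_1\to M_{2^\infty}$ carrying $\bigoplus_n\rho_n(a)$ to the image of $a$. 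Because the $\iota_n$ are isometric, $\Theta((x_n)_n)=0$ forces $\|x_n\|\to 0$, so $\ker\Theta$ is precisely the vanishing-norm ideal; hence $\Theta$ descends to a cover isomorphism $\mathfrak c\cong M_{2^\infty}$. As the chain $\{B_m\}$ is a subfamily of all RFD covers, $\mathfrak m\preceq\mathfrak c$, and combining with $M_{2^\infty}\preceq\mathfrak m$ gives $M_{2^\infty}\preceq\mathfrak m\preceq\mathfrak c\cong M_{2^\infty}$, whence $\mathfrak m=M_{2^\infty}$.

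I expect the main obstacle to be exactly this identification of the limit. Unlike the genuinely compact setting of Theorem \ref{t:tri-compact}, here the compressions $\rho_n(a)$ converge to $a$ only strongly and never in norm, so one cannot simply argue that the limiting representation is the identity representation on $H$. The way around this is to sidestep norm-convergence of the compressions entirely: compute the inductive-limit norm as a $\limsup$ of coordinate norms and exploit the exact coherence of the standard-embedding sequences, matching $\ker\Theta$ with the vanishing ideal through the isometry of the embeddings $\iota_n$.
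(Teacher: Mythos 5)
Your proposal is correct and is essentially the paper's own argument: the paper likewise reduces (2) to (1) via simplicity of $M_{2^\infty}$, realizes $\T_{s,2^\infty}$ through the infinite standard embedding $\psi_n(A)=A\oplus A\oplus\cdots$ (the same representation as your infinite tensor product picture), compresses to the finite-dimensional corners to get multiplicative maps $\rho_{2^n}$ that eventually agree with the standard embeddings, forms the tail covers $\bigoplus_{n\ge m}\rho_{2^n}$, and identifies the meet of this downward chain with $M_{2^\infty}$ via Proposition \ref{p:meet}. Your explicit computation of the inductive-limit norm as $\limsup_n\|x_n\|$ together with the map $\Theta$, and your explicit appeal to $M_{2^\infty}=\rC^*_e(\T_{s,2^\infty})$ to get the lower bound on the meet of \emph{all} RFD covers, simply fill in steps the paper leaves terse.
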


\begin{proof}
As in Theorem \ref{t:tri-compact}, statement (2) will immediately follow from (1) as $M_{2^\infty}$ is simple and infinite-dimensional. Now, for each $n\geq 1$, we embed $T_{2^n}$ in $B(\ell^2(\mathbb N))$ using an infinite standard embedding 
\[
\psi_n : A \ \mapsto \ A\oplus A \oplus A \oplus \dots\,.
\]
Let $\varphi_n :T_{2^n} \rightarrow T_{2^{n+1}}$ denote the standard embedding. It is immediate that $\psi_{n+1}\varphi_n = \psi_n$ and thus, there is a completely isometric isomorphism
\[
\psi : \T_{s,2^\infty} \rightarrow \overline{\bigcup_{n\geq 1} \psi_n(T_{2^n})}.
\]
Moreover, since $C^*(\psi_n(T_{2^n})) \simeq M_{2^n}$, we have $C^*(\psi(\T_{s,2^\infty})) \simeq M_{2^\infty}$.

Define a completely positive map $\rho_n : B(\ell^2(\mathbb N)) \rightarrow M_n$ by $\rho_n(A) = V_n^*AV_n$. Furthermore, note that $\rho_n$ is multiplicative on the algebra of upper triangular operators. Thus, for each $m,k\in\bbN$ such that $m\geq k$, we have
\[
C^*\left(\left(\bigoplus_{n\geq m} \rho_{2^n} \right)(\psi_n(T_{2^n}))\right) \simeq M_{2^n}\,.
\]
Hence,
\begin{align*}
\bigwedge_{m\geq 1} \left[ C^*\left(\left(\bigoplus_{n\geq m} \rho_{2^n}\right)(\T_{s,2^\infty})\right), \bigoplus_{n\geq m} \rho_{2^n}\right] 
& \ \ = \ \  \left[ C^*\left(\left(\varinjlim_{m\rightarrow \infty} \bigoplus_{n\geq m} \rho_{2^n}\right)(\T_{s,2^\infty})\right), \varinjlim_{m\rightarrow \infty} \bigoplus_{n\geq m} \rho_{2^n}\right]
\\ \\ &\ \ = \ \ \left[ M_{2^\infty}, \iota_{\min} \right]\,. \qedhere
\end{align*}
\end{proof}

We note that Larson and Solel have previously studied residual finite-dimensionality for triangular AF algebras \cite{LarsonSolel}. However, their definition of residual finite-dimensionality is that the algebra is merely separated by its finite-dimensional representations. At present, the connection between the notion of residual finite-dimensionality we study here and separation by finite-dimensional representations remains a somewhat mysterious phenomenon \cite[Section 3]{Hartz}.


We consider one final example showing that the $\rC^*$-correspondence may be taken to be finite and that the operator algebra may also be taken to be unital. To this end, we record some finer details surrounding the $\rC^*$-correspondence of Example \ref{e:popescu}.

For each $d\in\bbN$, let $\bbF_d^+$ denote the free monoid generated by $\{1,\ldots, d\}$. For a word $w\in\bbF_d^+$, we let $|w|$ denote the length of the word. Furthermore, we let $\fF_d^2$ denote the Hilbert space $\ell^2(\bbF_d^+)$ and $\{e_w : w\in\bbF_d^+\}$ denote the canonical orthonormal basis for $\fF_d^2$. Construct a $d$-tuple of operators $L = (L_1, \ldots, L_d)$ on $\fF_d^2$ by prescribing \[L_je_w = e_{jw}, \ \ \ \ \ \ \ \ w\in\bbF_d^+,\  j=1,\ldots, d.\]It can be easily verified that $\{L_1\ldots, L_d\}$ is a family of isometries with pairwise orthogonal ranges and that, when $d=1$, we recover the classical unilateral shift. The unital norm-closed algebra $\A_d$ that is generated by $\{L_1, \ldots, L_d\}$ is called the {\bf non-commutative disc algebra}. If we let $\T_d:=\rC^*(\A_d)$, then $\T_d$ is the Cuntz--Toeplitz algebra. In particular, there is a short exact sequence\[0\longrightarrow K(\fF_d^2)\longrightarrow\T_d\longrightarrow\O_d\longrightarrow0\]where $\O_d$ denotes the Cuntz algebra.

As we have noted, the operator algebra $\A_d$ can be recognized as the tensor algebra of the finite $\rC^*$-correspondence $(\mathbb{C}^d, \mathbb{C})$ \cite[Example 2.7]{MS1}. In particular, the algebra $\A_d$ is residually finite-dimensional on account of \cite[Theorem 4.6]{CR}. If we also assume that $d\geq 2$, then $\A_d$ does not have an RFD $\rC^*$-cover dominated by $[\T_d, \id]$. Indeed, this can be read directly from the above short exact sequence.

A {\bf row contraction} is a $d$-tuple $T = (T_1, \ldots, T_d)$ of operators on a Hilbert space $\H$ with the property that \[\sum_{i=1}^d T_i T_i^*\leq I.\]When $T = (T_1, \ldots, T_d)$ consists of (co)isometries with pairwise orthogonal ranges, then $T$ is said to be a {\bf row (co)isometry}. If $T$ is a row isometry for which equality is achieved, then $T$ is said to be of {\bf Cuntz type}. Popescu proved that the algebra $\A_d$ is universal for unital operator algebras generated by a row contraction \cite[Theorem 2.1]{Popescu}. That is, for any row contraction $(T_1, \ldots, T_d)$, there is a unital representation $\rho:\A_d\rightarrow B(\H)$ such that $\rho(L_i) = T_i$ for each $i=1,\ldots, d$. Leveraging this universal property, we can extrapolate a finer conclusion than that of Theorem \ref{t:fin-tens-meet}.


\begin{theorem}\label{t:pop-rfd-min}
Fix a positive integer $d\ge1$ and let $\A_d\subset B(\fF_d^2)$ denote the non-commutative disc algebra. Then the following statements hold.\begin{enumerate}
    \item [(1)] There is a sequence $[\R_m, \upsilon_m], m\geq1,$ of residually finite-dimensional C*-covers for $\A_d$ such that \[\bigwedge_{m\in\bbN}[\R_m, \upsilon_m] = [\T_d\oplus\T_d, \eta]\]where $\eta:\A_d\rightarrow B(\fF_d^2\oplus\fF_d^2)$ is such that $\eta(L_i) = L_i \oplus L_i^*$ for each $i=1,\ldots, d$.
    \item [(2)] If $d\geq 2$, then the space of residually finite-dimensional C*-covers for $\A_d$ has no minimal element.
\end{enumerate}
\end{theorem}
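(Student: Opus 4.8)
The plan is to transplant the method of Section~\ref{s:disc-alg} to the free semigroup, replacing the truncated unilateral shift by compressions of the left creation operators and the classical Wold decomposition by its noncommutative analogue; statement (2) will then be deduced from (1). For the reduction, if the RFD $\rC^*$-covers of $\A_d$ admitted a minimal (equivalently, least) element $[\R,\upsilon]$, then $[\R,\upsilon]\preceq[\R_m,\upsilon_m]$ for every $m$, and hence $[\R,\upsilon]\preceq\bigwedge_{m}[\R_m,\upsilon_m]=[\T_d\oplus\T_d,\eta]$. This produces a surjective $\ast$-homomorphism $\T_d\oplus\T_d\twoheadrightarrow\R$. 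Since $\T_d/K(\fF_d^2)\simeq\O_d$ is simple, the only ideals of $\T_d$ are $0$, $K(\fF_d^2)$ and $\T_d$, so every nonzero quotient of $\T_d\oplus\T_d$ contains $\T_d$ or $\O_d$ as a direct summand. As neither is RFD, $\R$ could not be a nonzero RFD $\rC^*$-algebra, a contradiction; thus it suffices to prove (1).

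For (1), for each $n\ge1$ I put $\H_n=\Span\{e_w:|w|<n\}$. Because each $L_i$ raises word length, $\H_n$ is coinvariant, so compression gives a completely contractive representation $\rho_n:\A_d\to B(\H_n)\cong M_{\dim\H_n}$; equivalently $(P_{\H_n}L_iP_{\H_n})_{i=1}^d$ is a finite-dimensional row contraction, to which Popescu's universal property \cite[Theorem~2.1]{Popescu} applies. Setting $\R_m:=\rC^*\big((\bigoplus_{n\ge m}\rho_n)(\A_d)\big)$ yields an RFD $\rC^*$-cover, and the coordinate projections deleting the blocks $n<m$ organize $\{[\R_m,\upsilon_m]\}$ into a downward directed chain. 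By Proposition~\ref{p:meet} its meet is $\varinjlim_m\R_m$ with induced embedding $\eta$; concretely, after verifying (as in Proposition~\ref{prop:infinite_compression}) that each block $\rC^*(\rho_n(\A_d))$ is all of $B(\H_n)$ and that the central block projections lie in $\R_1$, this limit is $\R_1/\bigoplus_n B(\H_n)$.

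The crux is to identify this quotient. Writing $W_i:=\eta(L_i)$, a direct computation modulo $\bigoplus_n B(\H_n)$ shows that $(W_1,\dots,W_d)$ is a row power partial isometry with $W_i^\ast W_j=\delta_{ij}(1-q)$ and $\sum_i W_iW_i^\ast=1-p$, where $p,q$ are nonzero boundary projections coming respectively from the vacuum vector and from the top length layer of each $\H_n$. I would then apply a noncommutative Wold-type decomposition for such a tuple---the $d$-variable analogue of Proposition~\ref{prop:Wold_partial_iso}---to split it into a pure row-isometry summand and a summand whose adjoint is a pure row isometry, every truncated finite-rank piece having been absorbed into $\bigoplus_n B(\H_n)$ and hence killed in the limit. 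Each boundary summand generates a copy of the Cuntz--Toeplitz algebra $\T_d$, and a gauge-invariant uniqueness argument in the spirit of \cite[Theorem~6.2]{Katsura} identifies $\varinjlim_m\R_m\cong\T_d\oplus\T_d$ under which $\eta(L_i)=L_i\oplus L_i^\ast$. Because $\T_d\supseteq K(\fF_d^2)$, the meet fails to be RFD, giving (1).

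The main obstacle is exactly this final identification. For $d=1$ it is the content of Section~\ref{s:disc-alg}, where Proposition~\ref{prop:Wold_partial_iso} (a special case of Halmos--Wallen) suffices. For $d\ge2$ the difficulty is twofold: one must first establish the correct multivariable Wold decomposition for a row \emph{partial} isometry satisfying the relations above, and then use the gauge action to rule out any surviving Cuntz (unitary) part and to confirm that each boundary summand is a \emph{full} copy of $\T_d$ rather than a proper quotient such as $\O_d$. Tracking how the truncated shifts degenerate along the direct limit---so that they contribute nothing beyond the ideal $\bigoplus_n B(\H_n)$---is the delicate bookkeeping at the heart of the proof.
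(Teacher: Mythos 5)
Your architecture matches the paper's: the same chain of covers $\R_m=\rC^*\big((\bigoplus_{k\ge m}\rho_k)(\A_d)\big)$ built from compressions to the word-length truncations, the identification of the meet with the direct limit via Proposition \ref{p:meet}, a Wold-type analysis of the limit tuple, and the deduction of (2) from (1) by noting that no nonzero quotient of $\T_d\oplus\T_d$ is RFD. However, the proposal does not actually prove the theorem: the step you yourself flag as ``the main obstacle'' --- establishing the multivariable Wold decomposition for the limit tuple and identifying the limit $\rC^*$-algebra with $\T_d\oplus\T_d$ --- is precisely the content of the paper's proof, and it is left unexecuted here. The paper does not invoke an off-the-shelf $d$-variable Halmos--Wallen theorem; it derives, block by block, the identities $S_{l,k}^*\big(I-\sum_iS_{i,k}S_{i,k}^*\big)=0$, $(S_{l,k}^*)^jS_{l,k}^j\big(I-\sum_iS_{i,k}S_{i,k}^*\big)=I-\sum_iS_{i,k}S_{i,k}^*$, and $\big(I-\sum_iS_{i,k}^*S_{i,k}\big)S_{l,k}^j\big(I-\sum_iS_{i,k}S_{i,k}^*\big)=0$, pushes them to the direct limit to get $V_i^*P=0$, $(V_i^*)^jV_i^jP=P$ and $QV_i^jP=0$, and from these builds the two mutually orthogonal reducing subspaces $\H_1=\bigoplus_wV_w\M$ and $\H_2=\bigoplus_wV_w^*\N$ by hand. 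Without this computation (or a citable substitute) your argument is a plan, not a proof.

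Two further points where your route would need repair. First, your plan to ``use the gauge action to rule out any surviving Cuntz (unitary) part'' is both unnecessary and likely not achievable: nothing forces the Cuntz-type summand $U$ on $\H_3=(\H_1\oplus\H_2)^\perp$ to vanish in the limit representation. The paper instead \emph{absorbs} it, using that $\O_d=\rC^*_e(\A_d)$ is a quotient of $\T_d$ to produce $\ast$-homomorphisms $\theta_1:\T_d\oplus\T_d\to\O_d$ and $\theta_2:\O_d\to\rC^*(I,U)$, whence $\rC^*(I,L\oplus L^*\oplus U)=(\id\oplus\theta_2\theta_1)(\T_d\oplus\T_d)\simeq\T_d\oplus\T_d$ regardless of whether $\H_3$ is zero; similarly no gauge-invariant uniqueness argument is needed to see the summands are full copies of $\T_d$, since the first summand is literally $L\otimes I_{P\H}$ with $P\ne0$. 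Second, your intermediate identification of the limit as $\R_1/\bigoplus_nB(\H_n)$ requires proving the analogue of Proposition \ref{prop:infinite_compression}(2), namely that the block ideals $B(\fL_k)$ lie inside $\R_1$; this is an additional unverified containment that the paper avoids entirely by working with relations directly in the inductive limit.
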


\begin{proof}
(1) For each $k\in\bbN$, let $\fL_k\subset\fF_d^2$ denote the closed linear span of $\{e_w : w\in\bbF_d^+, \ |w|\leq k\}$. For every $k\in\bbN, j=1,\ldots, d,$ define \[S_{j,k} = P_{\fL_k}L_j\mid_{\fL_k}\] and note that $(S_{1,k},\ldots, S_{d,k})$ is a row contraction. Furthermore, for each $n\in\bbN$, \[(T_{m,1}, \ldots, T_{m,d}) = \bigoplus_{k\geq m} (S_{1,k}, \ldots, S_{d,k}),\]is also a row contraction. A straightforward computation reveals that, for each $k\in\bbN$, the subspace $\fL_k$ is coinvariant for $\A_d$. Since $P_{\fL_k}$ converges to the identity operator in the strong operator topology, there is a completely isometric representation \[\upsilon_m: \A_d\rightarrow \ol{\text{alg}}\{ I, T_{m,1}, \ldots, T_{m,d}\}, \quad m\in\bbN,\]such that $\upsilon_m(L_i) = T_{m,i}$ for each $1\leq i\leq d$. Thus, letting $\R_m:=\rC^*(\upsilon_m(\A_d))$, we find that $\{[\R_m, \upsilon_m]: m\geq1\}$ is a collection of $\rC^*$-covers for $\A_d$. As $\{\fL_k : k\in\bbN\}$ is a family of finite-dimensional Hilbert spaces and \[\R_m\subset \prod_{k\geq m} B(\fL_k),\] it follows that the $\rC^*$-covers $\{ [\R_m, \upsilon_m] : m\geq 1\}$ are RFD.

The remainder of the proof is dedicated to achieving a concrete representation of \[\bigwedge_{m\in\bbN}[\R_m, \upsilon_m].\]To this end, for each $m\in\bbN$, note that there is a projection mapping \[\prod_{k\geq m} B(\fL_k)\rightarrow\prod_{k\geq{m+1}} B(\fL_k)\]that restricts to a morphism of $\rC^*$-covers. That is, we have $[\R_m, \upsilon_m]$ determines a downward directed sequence in $\cstarlattice(\A_d)$. By Proposition \ref{p:meet}, we have\[\bigwedge_{m\in\bbN}[\R_m, \upsilon_m] = \left[\varinjlim \R_m, \upsilon\right]\]for some completely isometric map $\upsilon:\A_d\rightarrow B(\H)$. We let $\R = \varinjlim \R_m$ and $(V_1, \ldots, V_d)$ denote the operator $d$-tuple $(\upsilon(L_1), \ldots, \upsilon(L_d))$. 

By the universal property of $\A_d$, there is a completely isometric representation \[\eta:\A_d\rightarrow B(\fF_d^2\oplus\fF_d^2)\] such that $\eta(L_i) = L_i\oplus L_i^*$ for each $i=1,\ldots, d$. Note that the unital $\rC^*$-algebra generated by $\eta(\A_d)$ is $\T_d \oplus \T_d$ and thus, $[\T_d\oplus\T_d, \eta]$ defines a $\rC^*$-cover for $\A_d$. We claim that\[\bigwedge_{m\in\bbN}[\R_m, \upsilon_m] = [\T_d\oplus\T_d, \eta].\]To achieve this, we prove a Wold-like decomposition for the row contraction $(V_1, \ldots, V_d)$. First, one can directly verify that 
\[I- \sum_{i=1}^d S_{i,k}S_{i,k}^* \quad \text{and} \quad I- \sum_{i=1}^d S_{i,k}^*S_{i,k}\]
are non-zero projections for each $k\in\bbN$. Thus, in the direct limit, $\R = \varinjlim \R_m$, we have that \[ P = I - \sum_{i=1}^d V_i V_i^* \quad \text{and} \quad Q = I - \sum_{i=1}^d V_i^*V_i\]are non-zero projections as well. In addition, a tedious check reveals that 
\[S_{l,k}^*\left(I-\sum_{i=1}^d S_{i,k}S_{i,k}^*\right) = 0, \quad (S_{l, k}^*)^j S_{l,k}^j \left(I-\sum_{i=1}^d S_{i,k}S_{i,k}^*\right) = I-\sum_{i=1}^d S_{i,k}S_{i,k}^*\]
whenever $k\in\bbN, 1\leq l\leq d,$ and $1\leq j\leq k-1$. Thus, we must have that 
\[V_i^*P = 0 \quad \text{and} \quad (V_i^*)^j V_i^j P = P, \quad j\in\bbN, \ 1\leq i\leq d.\] As a consequence, we have 
\[\H_1:= \R P\H = \bigoplus_{w\in\bbF_d^+} V_w\M\]
where $\M = \bigcap_{i=1}^d \ker(V_i^*)$ and $V_w = V_{i_1}V_{i_2}\ldots V_{i_m}$ whenever $w=i_1i_2\ldots i_m$. Moreover, $\H_1$ is a wandering subspace for $(V_1, \ldots, V_d)$ and $(V_1\mid_{\H_1}, \ldots, V_d\mid_{\H_1})$ is a row isometry. A symmetric argument shows \[\H_2:= \R Q\H = \bigoplus_{w\in\bbF_d^+} V_w^*\N\]where $\N = \bigcap_{i=1}^d \ker(V_i)$, and that $(V_1\mid_{\H_2}, \ldots, V_d\mid_{\H_2})$ is a row coisometry.

We claim that $\H_1$ is orthogonal to $\H_2$. To this end, observe that, for each $j,k\in\bbN$ such that $1\leq j\leq k-2$, and each $1\leq l\leq d$,\[\left( I - \sum_{i=1}^d S_{i,k}^*S_{i,k}\right) S_{l,k}^j\left( I - \sum_{i=1}^d S_{i,k}S_{i,k}^*\right) = 0.\]In the direct limit $\R$, we must then have $QV_i^j P = 0$ for each $j\in\bbN$ and each $1\leq i\leq d.$ Consequently, \[\langle V_i^j P h, (V_i^*)^l Qk\rangle = \langle Q V_i^{j+l} P h, k\rangle = 0, \quad h,k\in\H,\]and $\H_1$ is orthogonal to $\H_2$ as desired.

This will now provide us with a direct sum decomposition for the $d$-tuple $V = (V_1, \ldots, V_d)$. When $h\in\bigcap_{i=1}^d\ker(V_i)$, we have that \[Qh = \left( I- \sum_{i=1}^d V_i^*V_i\right) h = h\]and so $\bigcap_{i=1}^d \ker V_i\subset \H_2.$ Similarly, it is straightforward to verify that \[\bigcap_{i=1}^d \ker (V_i^*)\subset \H_1.\]In particular, letting $\H_3 = (\H_1\oplus\H_2)^\perp$, we have that $V\mid_{\H_3}$ is of Cuntz type. Hence, we have a unitary equivalence \[V = V\mid_{\H_1} + V\mid_{\H_2} + V\mid_{\H_3} \simeq (L\otimes I_{P\H}) \oplus (L^*\otimes I_{Q\H}) \oplus U\]where $U$ is a $d$-tuple of Cuntz isometries.

By \cite[Theorem 3.1]{Popescu}, the quotient mapping $\T_d\rightarrow\O_d$ is completely isometric on $\A_d$. Thus, the simplicity of $\O_d$ implies that it must be the $\rC^*$-envelope for $\A_d$. In particular, there is a pair of $\ast$-homomorphisms \[\theta_1: \T_d\oplus\T_d\rightarrow\O_d \quad \text{and} \quad \theta_2:\O_d\rightarrow\rC^*(I, U)\]such that $\theta_1(L\oplus L^*)$ is a $d$-tuple of Cuntz isometries and $(\theta_2\theta_1)(L\oplus L^*) = U$. Therefore, we have a generator-preserving $*$-isomorphism \[\rC^*(\upsilon(\A_d)) \simeq \rC^*(I, L\oplus L^*\oplus U) = (id\oplus(\theta_2\theta_1))(\T_d\oplus\T_d)\simeq\T_d\oplus\T_d.\] In particular, this establishes an equivalence \[
\bigwedge_{n\in\bbN}[\R_n, \upsilon_n] = [\T_d\oplus\T_d, \eta]
\] of $\rC^*$-covers for $\A_d$, as desired.

(2) For this implication, assume that $d\geq 2$. Let $[\R, \gamma]$ be the meet of all RFD $\rC^*$-covers for $\A_d$. We claim that $\R$ is not an RFD $\rC^*$-algebra. By statement (1), we must have that \[[\R, \gamma]\preceq [\T_d\oplus\T_d, \eta].\] Thus, $\R$ must be a quotient of $\T_d\oplus\T_d$. Since $\T_d\oplus\T_d$ has no quotients that are RFD, we may conclude that $\R$ cannot be RFD.
\end{proof}


It is conceivable that our argument for $\A_d$ can be strengthened to much larger families of operator algebras. Consider a finite (directed) graph $G$ that has a cycle with an entry and consider the tensor algebra $\T_{X_G}^+$ of the graph correspondence $X_G$. The tensor algebra $\T_{X_G}^+$ admits a convenient representation as a so-called Toeplitz--Cuntz--Krieger family (see \cite[Section 3]{DS} for more details). As $G$ is finite and there is a cycle with an entry in $G$, the graph $\rC^*$-algebra $\rC^*(G)$, which is the $\rC^*$-envelope, fails to be RFD \cite{BelShul}. Furthermore, the Toeplitz algebra $\T_{X_G}$ cannot be RFD either, as it contains proper isometries. From here, it is conceivable that combining our proof strategy with the argument of \cite[Theorem 3.2]{DS} implies that the meet of all RFD $\rC^*$-covers for $\T_{X_G}^+$ is not RFD either.

All of the examples we have considered in this paper are tensor algebras of C*-correspondences. This should not suggest anything to the reader, as it is a broad class of frequently studied operator algebras. To rectify this, note that the adjoint $\A^*$ of any of our previous examples can share similar conclusions by \cite[Proposition 2.8]{HumRam}. For the adjoint $\A_d^*$, $d\geq 2$, of the non-commutative disc algebra, we have that $\A_d^*$ is not completely isometrically isomorphic to a tensor algebra as it is not semi-Dirichlet. One could also consider $\A = \A_d \oplus \A_d^*$, whose RFD $\rC^*$-covers do not form a lattice, nor does there exist a minimal RFD $\rC^*$-cover \cite[Theorem 2.10]{HumRam}. In this case, both $\A$ and $\A^*$ fail to be semi-Dirichlet, and so neither is a tensor algebra of a C*-correspondence. However, as we shall outline below, there are many other far-reaching classes of algebras where our arguments could conceivably be adapted. 

In accordance with this, as well as the developments of this paper, we present the two natural questions that remain.

\begin{question}
    Does there exist an RFD operator algebra with the property that not every $\rC^*$-cover is RFD, and yet the RFD $\rC^*$-covers form a (complete) lattice?
\end{question}

\begin{question}
    Is there an RFD operator algebra for which the meet $[R, \gamma]$ of all RFD $\rC^*$-covers is RFD, yet $[R, \gamma]$ is not the $\rC^*$-envelope?
\end{question}

A positive answer to either question would likely need to overcome the dilation-theoretic obstacle that we present in this paper. With regard to the latter question, one would require an operator algebra $\A$ admitting a sufficiently large family $\F$ of finite-dimensional representations where each $\pi\in\F$ cannot be non-trivially dilated to a finite-dimensional representation, while simultaneously allowing for the existence of a non-trivial (infinite-dimensional) dilation. Unfortunately, many of the most well-understood classes of non self-adjoint operator algebras fall outside this class (see \cite[Proposition 4.1]{CDO},\cite[Theorem 7.1]{DDSS},\cite[Section 3]{DS},\cite[Section 3]{MP} for example). Furthermore, for $\iota:=\bigoplus_{\pi\in\F}\pi$ to be completely isometric, it would be necessary that $\{\dim\pi: \pi\in\F\}$ be unbounded. Otherwise, the induced $\rC^*$-cover $\rC^*(\iota(\A))$ would be subhomogeneous and, in particular, this would force the quotient $\rC^*_e(\A)$ to be residually finite-dimensional. However, at the time of writing, the authors are unaware of how to construct an example of a single representation satisfying the dilation-theoretic constraints outlined above.

\section*{Acknowledgements} The authors would like to thank David Sherman for discussions and ideas that led to this project and Mitch Hamidi for many discussions and his support of this project. The first author was supported by the NSERC Discovery Grant 2024-03883. The second author was
supported by the NSERC Discovery Grant 2019-05430. The third author was supported by an NSERC Postdoctoral Fellowship.

\end{document}